\newcommand{\ZZ}{\mathbb{Z}}
\newcommand{\NN}{\mathbb{N}}
\newcommand{\Nc}{\mathcal{N}}
\newcommand{\CC}{\mathbb{C}}
\newcommand{\RR}{\mathbb{R}}
\newcommand{\Hi}{\mathscr{H}}
\newcommand{\dprod}[2]{\left\langle #1,#2\right\rangle}
\newcommand{\norm}[1]{\left\lVert #1\right\rVert}
\newcommand{\proj}[1]{\left|#1\right\rangle\left\langle#1\right|}
\newcommand{\matx}[1]{\left(\begin{matrix} #1 \end{matrix}\right)}
\newcommand{\clsp}[1]{\overline{\left\langle #1 \right\rangle}}
\newcommand{\essSup}[1]{\mathop{\text{ess-sup}}_{#1}}
\newtheorem{theorem}{Theorem}[section]
\newtheorem{lemma}[theorem]{Lemma}
\newtheorem{remark}[theorem]{Remark}
\numberwithin{equation}{section}
\newtheorem{corollary}[theorem]{Corollary}
\title{Multiplicity theorem of singular Spectrum for general Anderson type Hamiltonian}
\author{Anish Mallick\footnote{E-mail:\texttt{anish.mallick@icts.res.in}, Institute: ICTS-TIFR Bangaluru, India.} \& Dhriti Ranjan Dolai\footnote{E-mail:\texttt{dhriti\_vs@isibang.ac.in}, Institute: ISI Bangalore, India.}}
\date{\today}
\begin{document}
\maketitle
\begin{abstract}
In this work, we focus on the multiplicity of singular spectrum for operators of the form $A^\omega=A+\sum_{n}\omega_n C_n$ on a separable Hilbert space $\Hi$, 
where $A$ is a self adjoint operator and $\{C_n\}_{n}$ is a countable collection of non-negative finite rank operators.
When $\{\omega_n\}_n$ are independent real random variables with absolutely continuous distributions,
we show that the multiplicity of the singular spectrum is almost surely bounded above by the maximum algebraic multiplicity of eigenvalues of 
$\sqrt{C_n}(A^\omega-z)^{-1}\sqrt{C_n}$ for all $n$ and almost all $(z,\omega)$.
The result is optimal in the sense that there are operators for which the bound is achieved. 
Using this, we also provide an effective bound on the multiplicity of singular spectrum for some special cases.
\end{abstract}
{\bf AMS 2010 Classification:} 81Q10, 47A10, 47A55, 47B39, 46N50.\\
{\bf Keywords:} Spectral Theory, Anderson model, Perturbation theory.
\section{Introduction}
Spectral theory of random operators is an important field of study, and within it, the Anderson tight binding model and random Schr\"{o}dinger operator have gained significant attention.
Over the years much attention has been given to the nature of their spectrum. 
But to completely characterize the structure of the operator, information on the multiplicity is also important.
Here we pay attention to the multiplicity of the singular spectrum for certain class of random operators.

One of the well studied class of random operators is the Anderson tight binding model.
Many results about its spectrum are known, for example: the existence of pure point spectrum is known for Anderson tight binding model over integer lattice \cite{AM,CKM,FMSS,JS}.
Absolutely continuous spectrum is known to exist for Anderson tight binding model over Bethe lattice \cite{FHS,K1}.
Other models where the pure point spectrum is known to exist includes random Schr\"{o}dinger operator \cite{B1,CH1,GK2,K3}, 
multi-particle Anderson model \cite{AW1,CMS1,KN} and magnetic Schr\"{o}dinger operators \cite{CH2,W1}.

There are important results which also concentrate on the multiplicity of the singular spectrum.
For the Anderson tight binding model, Simon \cite{BS2}, Klein-Molchanov \cite{KM2} have shown the simplicity of pure point spectrum.
For Anderson type models when the randomness acts as rank one perturbations, Jak{\v{s}}i{\'c}-Last \cite{JL1,JL2} showed that the singular spectrum is simple.
For random Schr\"{o}dinger operator, in the regime of exponential decay of Green's function, Combes-Germinet-Klein \cite{CGK} showed that the spectrum is simple. 
Other work includes \cite{SSH}, where Sadel and Schulz-Baldes provided multiplicity result for absolute continuous spectrum for random Dirac operators with time reversal symmetry.
But general results concerning the multiplicity of the spectrum are not known.
One of the difficulties involving multiplicity results for random Schr\"{o}dinger operator or multi-particle Anderson model is that the randomness acts as perturbation over an infinite rank operator.

Randomness acting through perturbation by a finite rank operator is an intermediate step between Anderson tight binding model and random Schr\"{o}dinger operator. 
Some example of such a random operator is Anderson dimer/polymer model, Toeplitz/Hankel random matrix, and random conductance model.
Here we will deal with Anderson type operators and provide multiplicity result for the singular spectrum when the randomness acts through perturbation by a finite rank non-negative operator.
This work is similar to the work done by Jak{\v{s}}i{\'c}-Last \cite{JL1,JL2} and is a generalization and extension of the work done by Mallick \cite{AM1}. 
Though this work does not answer the question about the multiplicity of singular spectrum for random Schr\"{o}dinger operator but it is a step towards it.
The technique involved in the proof does not distinguish between point spectrum and singular continuous spectrum, so stated results are true for whole of the singular spectrum.

For a densely defined self-adjoint operator $A$ with domain $\mathcal{D}(A)$ on a separable Hilbert space $\Hi$ and 
a countable collection of finite rank non-negative operator $\{C_n\}_{n\in\Nc}$, define the random operator 
\begin{equation}\label{eqn::mainOpEq1}
A^\omega=A+\sum_{n\in \Nc} \omega_n C_n,
\end{equation}
where $\{\omega_n\}_{n\in\Nc}$ are independent real random variables with absolutely continuous distribution.
Let $(\Omega,\mathcal{B},\mathbb{P})$ denote the probability space such that $\omega_n$ are random variables over $\Omega$. 
We will assume that 		
$$A^\cdot:\Omega\rightarrow\mathcal{S}(\Hi)$$
is an essentially self adjoint operator valued random variable.
This is a necessary assumption because otherwise there can be multiple self-adjoint extensions for the symmetric operator $A^\omega$. 
The assumption itself is not too restrictive and a large class of operators satisfy this condition. 
For example, if $A$ is bounded self-adjoint, $\{C_n\}_n$ are finite rank non-negative operators satisfying $C_nC_m=C_mC_n=0$ for any $n\neq m$ and
the distributions of the random variables $\omega_n$ are supported in some fixed compact set $[-K,K]$, then the operator $A^\omega$ is almost surely bounded and self adjoint. 
Anderson polymer/dimer model falls into this category of operators.

For the main result we need to focus on the linear maps
$$G^\omega_{n,n}(z):=P_n(A^\omega-z)^{-1}P_n:P_n\Hi\rightarrow P_n\Hi$$
for $z\in\CC\setminus\RR$, where $P_n$ is projection onto the range of $C_n$. 
Using functional calculus, it is easy to see that the linear operator $G^\omega_{n,n}(z)$ can be viewed as a matrix over $P_n\Hi$ (after fixing a basis of $P_n\Hi$), 
which belongs to the set of matrix valued Herglotz functions.
Using the representation of matrix-valued Herglotz functions (see \cite[Theorem 5.4]{GT1}), 
we can extract all the properties of the spectral measure over the minimal closed $A^\omega$-invariant subspace containing $P_n\Hi$.

We will use the notation $Mult^\omega_n(z)$ to denote the maximum multiplicity of the root of the polynomial
$$\det(C_n G^\omega_{n,n}(z)-xI)$$
in $x$, for $z\in \CC\setminus\RR$, 
where $C_n$ and $G^\omega_{n,n}(z)$ are viewed as a linear operator on $P_n\Hi$ and so $I$ denotes the identity operator on $P_n\Hi$.
Since $C_n>0$ on $P_n\Hi$ we have
\begin{equation*}
\det(C_n G^\omega_{n,n}(z)-xI)=\det(\sqrt{C_n} G^\omega_{n,n}(z)\sqrt{C_n}-xI),
\end{equation*}
because similarity transformation preserves determinant. 
This is the reason why algebraic multiplicity of $\sqrt{C_n}(A^\omega-z)^{-1}\sqrt{C_n}$ can also be used instead of $C_n G^\omega_{n,n}(z)$.
With these notations, we state our main result:
\begin{theorem}\label{mainThm}
Let $A$ be a densely defined self-adjoint operator with domain $\mathcal{D}(A)$ on a separable Hilbert space $\Hi$ and $\{C_n\}_{n\in\Nc}$ be a countable collection of finite rank non-negative operator. 
Denote $P_n$ to be the projection onto the range of $C_n$ and let $\sum_n P_n=I$.
Let $\{\omega_n\}_{n\in\Nc}$ be a sequence of independent real random variables on the probability space $(\Omega,\mathcal{B},\mathbb{P})$ with absolutely continuous distribution.
Let $A^\omega$ given by \eqref{eqn::mainOpEq1} be a family of essentially self adjoint operators. 
Then
\begin{enumerate}
\item For any $n\in\Nc$
$$\essSup{z\in\CC\setminus\RR}Mult^\omega_n(z)$$
is constant for almost all $\omega$, which will be denoted by $\mathcal{M}_n$.
\item If $\sup_{n\in\Nc} \mathcal{M}_n<\infty$, then the multiplicity of singular spectrum for $A^\omega$ is upper bounded by $\sup_{n\in\Nc} \mathcal{M}_n$, for almost all $\omega$.
\end{enumerate}
\end{theorem}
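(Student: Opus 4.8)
The plan is to reduce the whole analysis to the single-site Green matrices $G^\omega_{n,n}(z)$ and then to glue the per-site information together using the hypothesis $\sum_n P_n=I$ together with the independence and absolute continuity of the couplings. The first ingredient is a finite-rank Krein (resolvent) identity. Writing $B^{(n)}:=A+\sum_{m\neq n}\omega_m C_m$ and $\Gamma_n(z):=\sqrt{C_n}(B^{(n)}-z)^{-1}\sqrt{C_n}$ as an operator on $P_n\Hi$, the resolvent expansion for the rank perturbation $\omega_n C_n$ gives
\[
\sqrt{C_n}(A^\omega-z)^{-1}\sqrt{C_n}=\Gamma_n(z)\bigl(I+\omega_n\Gamma_n(z)\bigr)^{-1}.
\]
Since $\lambda\mapsto\lambda/(1+\omega_n\lambda)$ is an injective fractional-linear map, it carries the eigenvalues of $\Gamma_n(z)$ to those of the left-hand side while preserving algebraic multiplicities; hence $Mult^\omega_n(z)$ equals the maximal algebraic multiplicity of the eigenvalues of $\Gamma_n(z)$ and in particular does not depend on $\omega_n$. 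Setting $\mathcal{M}_n:=\essSup{(z,\omega)}Mult^\omega_n(z)$, a finite integer bounded by $\operatorname{rank}C_n$, the set on which $Mult^\omega_n(z)$ attains $\mathcal{M}_n$ has positive measure, so by Fubini $\essSup{z}Mult^\omega_n(z)=\mathcal{M}_n$ with positive probability; the mutual independence of the couplings then promotes this to almost sure equality, which is assertion~(1).

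For assertion~(2) I would read the spectral multiplicity off the cyclic subspaces $\Hi_n:=\clsp{(A^\omega-z)^{-1}P_n\Hi:z\in\CC\setminus\RR}$. Because $\sum_n P_n=I$, the family $\{\Hi_n\}_n$ generates $\Hi$, so the singular part of the spectral multiplicity of $A^\omega$ is controlled by the singular parts of the matrix-valued measures $\Sigma_n$ attached to $G^\omega_{n,n}$ through the Herglotz representation of \cite[Theorem 5.4]{GT1}. At almost every energy $\lambda$ (with respect to the singular part of $\Sigma_n$) the boundary value $G^\omega_{n,n}(\lambda+i0)$ exists, and the local multiplicity contributed by $\Hi_n$ equals the rank of the associated singular boundary residue. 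The central estimate is that this rank is almost surely at most $Mult^\omega_n(\lambda+i0)\le\mathcal{M}_n$: by the Krein identity a singular contribution at $\lambda$ forces $\Gamma_n(\lambda+i0)$ to have the real eigenvalue $-1/\omega_n$, and a rank exceeding $Mult^\omega_n(\lambda+i0)$ would require eigenvalue branches of $\Gamma_n$ that are \emph{not} identically equal to coincide at the value $-1/\omega_n$ there. Such coincidences can occur only at the countable, $\omega_n$-independent set of branch-crossing values, so the absolute continuity of the distribution of $\omega_n$ makes the event probability zero.

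It then remains to combine the sites. Here I would invoke the Jak{\v{s}}i{\'c}--Last mechanism: for $n\neq m$ the singular energies generated at site $n$ are governed by $\omega_n$ and those at site $m$ by $\omega_m$, so by independence and absolute continuity the singular parts of $\Sigma_n$ and $\Sigma_m$ are almost surely mutually singular. Consequently, at any singular energy $\lambda$ the (generalized) eigenvectors are confined to a single $P_n\Hi$; since $\sum_k P_k=I$ this forces the whole local singular subspace into that $\Hi_n$, whence its dimension is bounded by the single-site quantity $\mathcal{M}_n\le\sup_k\mathcal{M}_k$, giving the asserted bound for almost every $\omega$.

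I expect the main obstacle to be this last gluing step in the presence of singular \emph{continuous} spectrum, where there are no point masses to compare directly and the mutual-singularity statement must be extracted from a finite-rank spectral-averaging argument rather than from a residue computation; moreover the non-orthogonality of the subspaces $\Hi_n$ makes the bookkeeping of multiplicities delicate, since one must rule out independent singular contributions from overlapping cyclic subspaces. A secondary technical point, underpinning both the per-site bound and the cross-site comparison, is establishing the almost-everywhere existence of the boundary values of the matrix Herglotz function $G^\omega_{n,n}$ and the precise dictionary between the rank of their singular parts and the local spectral multiplicity.
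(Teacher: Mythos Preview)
Your argument for part~(1) is incomplete. Showing that $Mult^\omega_n(z)$ is independent of $\omega_n$ via the fractional-linear transformation is correct and is essentially one step of the paper's argument, but it does \emph{not} yet tell you that $\essSup_z Mult^\omega_n(z)$ is a tail random variable, so your appeal to ``mutual independence of the couplings then promotes this to almost sure equality'' has no force. You must also show that $\essSup_z Mult^\omega_n(z)$ does not change under perturbation at any other site $p\neq n$. The paper does this by writing $G^{\omega,\lambda}_{p,n,n}(z)$ via the resolvent identity, expressing the characteristic polynomial and its iterated $\gcd$'s as rational functions of the holomorphic matrix entries $\{G^\omega_{i,j}(z)\}$ and $\lambda$, and using that a nonzero holomorphic function cannot vanish on a set of positive measure. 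Only after this is Kolmogorov's $0$--$1$ law applicable.

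The serious gap is in your gluing step for part~(2). Your claim that the singular parts of $\Sigma_n$ and $\Sigma_m$ are almost surely mutually singular, and that at a singular energy the generalized eigenvectors are ``confined to a single $P_n\Hi$'', is the \emph{opposite} of what actually happens and would, if true, give no multiplicity bound at all (disjoint singular contributions from different sites would add, not cap). In the Jak\v{s}i\'{c}--Last mechanism the point is cyclicity, not disjointness: in the rank-one case each $e_n$ is cyclic for the whole singular subspace, so the singular measures at different sites are \emph{equivalent}. The paper's higher-rank analogue is an inclusion lemma: for a finite-rank projection $Q$ and a single perturbed site with projection $P_1$, there is a full-Lebesgue-measure set $S$ with
\[
E^\lambda_{\mathrm{sing}}(S)\,\Hi^\lambda_{Q}\ \subseteq\ E^\lambda_{\mathrm{sing}}(S)\,\Hi^\lambda_{P_1}
\]
for a.e.\ coupling $\lambda$ (proved via Poltoratskii's theorem applied to the matrix Herglotz function $P_1(H_\lambda-z)^{-1}P_1$). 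Inducting over finite collections $\{p_1,\dots,p_N\}$, the singular part of $\Hi^\omega_{\sum_{i\le N}P_{p_i}}$ on $S$ sits inside $\Hi^\omega_{P_{p_{N+1}}}$, and the leftover piece on $\RR\setminus S$ has disjoint support; both pieces carry multiplicity at most $\sup_n\mathcal{M}_n$ by the per-site bound. Spectral averaging is what lets you discard the Lebesgue-null bad set $\RR\setminus S$ from the site under perturbation. Your proposal correctly anticipates that the singular-continuous case and the non-orthogonality of the $\Hi_n$ are the delicate points, but the resolution is this inclusion lemma, not mutual singularity.
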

\begin{remark}\label{rem1}
There are few observations to be made:
\begin{enumerate}
\item Note that if $range(C_n)\subset\mathcal{D}(A)$ for all $n$, then the subspace
$$\mathcal{D}:=\left\{\sum_{i=1}^N \phi_i:\phi_i\in range(C_{n_i}), n_i\in\Nc~\forall 1\leq i\leq N,\forall N\in\NN\right\},$$
is dense and is the domain of $A^\omega$. 
If either $A$ is bounded or $\sup_n |\omega_n|\norm{C_n}$ is finite, then it is easy to show that $A^\omega$ is essentially self adjoint.
\item Note that although $\{C_n\}_n$ are finite rank operators, there may not be a universal upper bound on their ranks. An easy example of such an operator is
$$H^\omega=\Delta+\sum_{n=0}^\infty \omega_n \chi_{\{x:\norm{x}_\infty=n\}},$$
defined on the Hilbert space $\ell^2(\ZZ^d)$, where $\Delta$ is the discrete Laplacian and $\chi_{\{x:\norm{x}_\infty=n\}}$ is projection onto the subspace $\ell^2(\{x\in\ZZ^d:\norm{x}_\infty=n\})$.
\item We need $\sum_n P_n=I$ so that the subspace $\sum_n \Hi^\omega_{P_n}$ is dense in $\Hi$. Here we denote
$$\Hi^\omega_{P_n}=\clsp{f(A^\omega)\phi:f\in C_c(\RR),\phi\in P_n\Hi},$$
where $\clsp{S}$ denotes the closure of finite linear combination of elements of the set $S$.
Without this condition infinite multiplicity could easily be achieved. For example consider the Hilbert space $\oplus^2\ell^2(\ZZ)$, and define the operator
$$H^\omega=\left(\Delta+\sum_{n\in\ZZ} \omega_n\chi_{\{nN,\cdots,(n+1)N-1\}}\right)\oplus \left(\sum_{n\in\ZZ}x_n\proj{\delta_n}\right)$$
where $\{x_n\}_{n\in\ZZ}$ is a fixed sequence and $\{\omega_n\}_{n\in\ZZ}$ are independent real random variables with absolutely continuous distribution. 
Notice that first operator is Anderson like operator with simple point spectrum but the second operator can have arbitrary multiplicity depending upon the sequence $\{x_n\}_n$. 
\end{enumerate}
\end{remark}
\begin{remark}\label{rem2}
To understand the conclusion of the theorem, consider the following examples:
\begin{enumerate}
 \item On the Hilbert space $\ell^2(\ZZ\times\{0,\cdots,N\})$, consider the operator
 $$H^\omega=\tilde{\Delta}+\sum_{n\in\ZZ}\omega_{n}P_{n},$$
 where 
$$(\tilde{\Delta}u)(x,y)=u(x+1,y)+u(x-1,y)\qquad\forall (x,y)\in\ZZ\times\{0,\cdots,N\}$$
and the sequence of projections $P_{n}$ is given by
$$(P_{n}u)(x,y)=\left\{\begin{matrix} u(x,y) & x=n\\ 0 & x\neq n
\end{matrix}\right..$$

\begin{figure}[ht]
\begin{center}
\begin{tikzpicture}[scale=0.7]
\foreach \x in {0,...,3}
{
 \draw[very thick](-1,0.8*\x)--(6,0.8*\x);
\draw[very thick,dashed](6,0.8*\x)--(7,0.8*\x);
\draw[very thick,dashed](-2,0.8*\x)--(-1,0.8*\x);
 \foreach \y in {0,...,5}
  \filldraw(\y,0.8*\x)circle[radius=0.05];
}

\foreach \x in {0,...,5}
{
  \draw[opacity=0.3,fill=gray!80](\x-0.1,0-0.1)--(\x-0.1,2.4+0.1)--(\x+0.1,2.4+0.1)--(\x+0.1,0-0.1)--cycle;
}
\end{tikzpicture}

\end{center}
\caption{\footnotesize The operator described in the remark  is visualized here for $N=3$. 
The operator $\tilde{\Delta}$ is the adjacency operator over the graph $\ZZ\times\{0,\cdots,3\}$ where the edges are denoted by the black lines. 
The shaded region denotes the support of the  projections.}\label{fig1}
\end{figure}

First observe that
$$\Hi_k=\{u\in \ell^2(\ZZ\times\{0,\cdots,N\}): u(x,y)=0 ~\forall x\in\ZZ,y\neq k\}$$
is $H^\omega$ invariant and $\{(H^\omega,\Hi_k)\}_{k=0}^N$ are all unitarily equivalent.
So any singular spectrum has multiplicity $N$.
When $\{\omega_{n}\}_{n}$ are i.i.d, there are results \cite{K,SK1,BS4} which shows that $(H^\omega,\Hi_0)$ has pure point spectrum (hence singular spectrum). 
It is easy to show that the matrix $G^\omega_{n,n}(z)$ is of the form $f(z)I$, where $f$ is a Herglotz function and $I$ is identity on $\CC^N$.
\item On the Hilbert space $\ell^2(\NN\times\NN)$ consider the operator
$$H^\omega=\tilde{\Delta}+\sum_{(n,m)\in\NN^2} \omega_{(n,m)}P_{(n,m)}$$
where 
$$(\tilde{\Delta}u)(x,y)=\left\{\begin{matrix} u(2,y) & x=1 \\ u(x+1,y)+u(x-1,y) & x\neq 1\end{matrix}\right.\qquad\forall (x,y)\in\NN\times\NN$$
and the projections $P_{(n,m)}$ are given by
$$P_{(n,m)}=\sum_{k=2^{n(m-1)}}^{2^{nm}-1} \proj{\delta_{(n,k)}}.$$

\begin{figure}[ht]
\begin{center}
\begin{tikzpicture}[scale=0.6]
\clip[use as bounding box](-0.2,-0.2)rectangle(5.2,7.4);

\foreach \x in {0,...,14}
{
 \draw[very thick](0,\x*0.5)--(4,\x*0.5);
\draw[very thick,dashed](4,\x*0.5)--(5,\x*0.5);
 \foreach \y in {0,...,5}
  \filldraw(\y,\x*0.5)circle[radius=0.05];
}

\pgfmathparse{int(0)}\let\m\pgfmathresult;
\foreach \n in {0,...,3}
{
 \pgfmathparse{int(2^((\m+1)*\n)-1)}\let\minval\pgfmathresult;
 \pgfmathparse{int(2^((\m+1)*(\n+1))-2)}\let\maxval\pgfmathresult;
 \draw[opacity=0.3,fill=gray!80](\m-0.1,\minval*0.5-0.1)--(\m+0.1,\minval*0.5-0.1)--(\m+0.1,\maxval*0.5+0.1)--(\m-0.1,\maxval*0.5+0.1)--cycle;
}

\pgfmathparse{int(1)}\let\m\pgfmathresult;
\foreach \n in {0,...,1}
{
 \pgfmathparse{int(2^((\m+1)*\n)-1)}\let\minval\pgfmathresult;
 \pgfmathparse{int(2^((\m+1)*(\n+1))-2)}\let\maxval\pgfmathresult;
 \draw[opacity=0.3,fill=gray!80](\m-0.1,\minval*0.5-0.1)--(\m+0.1,\minval*0.5-0.1)--(\m+0.1,\maxval*0.5+0.1)--(\m-0.1,\maxval*0.5+0.1)--cycle;
}

\pgfmathparse{int(2)}\let\m\pgfmathresult;
\foreach \n in {0,...,1}
{
 \pgfmathparse{int(2^((\m+1)*\n)-1)}\let\minval\pgfmathresult;
 \pgfmathparse{int(2^((\m+1)*(\n+1))-2)}\let\maxval\pgfmathresult;
 \draw[opacity=0.3,fill=gray!80](\m-0.1,\minval*0.5-0.1)--(\m+0.1,\minval*0.5-0.1)--(\m+0.1,\maxval*0.5+0.1)--(\m-0.1,\maxval*0.5+0.1)--cycle;
}

\pgfmathparse{int(3)}\let\m\pgfmathresult;
\foreach \n in {0}
{
 \pgfmathparse{int(2^((\m+1)*\n)-1)}\let\minval\pgfmathresult;
 \pgfmathparse{int(2^((\m+1)*(\n+1))-2)}\let\maxval\pgfmathresult;
 \draw[opacity=0.3,fill=gray!80](\m-0.1,\minval*0.5-0.1)--(\m+0.1,\minval*0.5-0.1)--(\m+0.1,\maxval*0.5+0.1)--(\m-0.1,\maxval*0.5+0.1)--cycle;
}

\pgfmathparse{int(4)}\let\m\pgfmathresult;
\foreach \n in {0}
{
 \pgfmathparse{int(2^((\m+1)*\n)-1)}\let\minval\pgfmathresult;
 \pgfmathparse{int(2^((\m+1)*(\n+1))-2)}\let\maxval\pgfmathresult;
 \draw[opacity=0.3,fill=gray!80](\m-0.1,\minval*0.5-0.1)--(\m+0.1,\minval*0.5-0.1)--(\m+0.1,\maxval*0.5+0.1)--(\m-0.1,\maxval*0.5+0.1)--cycle;
}

\end{tikzpicture}

\end{center}
\caption{\footnotesize The operator described in the remark  is visualized here. 
The operator $\tilde{\Delta}$ is the adjacency operator over the graph $\NN^2$ where the edges are denoted by the black lines. The shaded region denotes the support of the  projections.}\label{fig2}
\end{figure}

In this example $P_{(n,m)}(H^\omega-z)^{-1}P_{(n,m)}$ is diagonal (w.r.t. the Dirac basis $\{\delta_{(n,m)}:n,m\in\NN\}$), and it is easy to see that
$$\sup_{(n,m)\in\NN} \mathcal{M}_{(n,m)}=\infty.$$
Similar to previous example the subspace
$$\Hi_k=\{u\in\ell^2(\NN\times\NN): u(x,y)=0~\forall x\in\NN,y\neq k\}\qquad\forall k\in\NN,$$
are invariant under the action of $H^\omega$. 
Notice that $\{(H^\omega,\Hi_k)\}_{k=2^m}^{2^{m+1}-1}$ are unitarily equivalent with each other for any $m\in\NN$. 
So the singular spectrum of $H^\omega$ has infinite multiplicity.
\end{enumerate}
So the conclusion of the theorem is optimal in the sense that there are random operators $A^\omega$ such that the multiplicity of singular spectrum is  $\sup_{n\in\Nc} \mathcal{M}_{n}$.
\end{remark}

The main technique in the proof involves studying the behavior of singular spectrum because of perturbation by single non-negative operator.
This is done through resolvent identity and so properties of matrix valued Herglotz functions plays an essential role. 
The steps involved in the proof will be further explained in section \ref{subsec1}.
In general these kind of results fails to hold without perturbation and spectral averaging \cite[Corollary 4.2]{CH1} plays an important role.
Since matrix valued Herglotz functions are the primary tool,  Poltoratskii's theorem \cite{POL1} is used to obtain and characterize the singular measure.

It should be noted that our result (Theorem \ref{mainThm}) extends the work of Jak{\v{s}}i{\'c}-Last \cite{JL1,JL2}, 
Naboko-Nichols-Stolz \cite{NNS} and Mallick \cite{AM3} in the following way:
in case of Jak{\v{s}}i{\'c}-Last\cite{JL1,JL2}, since the rank of each $P_n$ are one, above theorem gives the simplicity of singular spectrum. 
Naboko-Nichols-Stolz \cite{NNS} showed simplicity of the point spectrum for certain classes of Anderson type operator on $\ZZ^d$ 
and Mallick \cite{AM3} provided a bound on the multiplicity of the singular spectrum for a similar class of Anderson type operator on $\ZZ^d$.
In general it is not possible to compute $G^\omega_{n,n}(z)$, and so other methods has to be devised to get $\mathcal{M}_n$. 
The following corollary is a possible way to bound $\mathcal{M}_n$ for certain classes of random operators.

\begin{corollary}\label{corMultCouEx1}
On a separable Hilbert space $\Hi$, let $A^\omega$ defined by \eqref{eqn::mainOpEq1} satisfy the hypothesis of theorem \ref{mainThm}. 
Let $range(C_n)\subset \mathcal{D}(A)$ for all $n\in\Nc$, and let $M\in\RR$ be such that $\sigma(A)$ and $\sigma(A^\omega)$ are subset of $(M,\infty)$ for almost all $\omega$. 
Then 
\begin{enumerate}
\item If $C_n$ is a finite rank projection for all $n$, then the multiplicity of singular spectrum for $A^\omega$ is bounded above by 
$$\max_{n\in\Nc} \max_{x\in\sigma(C_nAC_n)} dim(ker(C_nAC_n-xI)),$$
where $C_nAC_n$ is viewed as a linear operator on $P_n\Hi$.
\item If $C_n$ is a non-negative finite rank operator for all $n$, then multiplicity of singular spectrum for $A^\omega$ is bounded above by 
$$\max_{n\in\Nc} \max_{x\in\sigma(C_n)} dim(ker(C_n-xI)),$$
where $C_n$ is viewed as a linear operator on $P_n\Hi$.
\end{enumerate}
\end{corollary}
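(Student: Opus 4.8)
The plan is to reduce everything to Theorem \ref{mainThm}: since the multiplicity of the singular spectrum is bounded by $\sup_{n}\mathcal{M}_n$, it suffices to bound each $\mathcal{M}_n=\essSup{z\in\CC\setminus\RR}Mult^\omega_n(z)$ by the stated quantity. I would analyze $C_nG^\omega_{n,n}(z)$ in the limit $z\to-\infty$ along the real axis. Here the hypothesis $\sigma(A^\omega)\subset(M,\infty)$ is essential: it guarantees that the ray $(-\infty,M)$ lies in the resolvent set of $A^\omega$ for almost every $\omega$, so $(A^\omega-z)^{-1}$ is a bounded self-adjoint operator with $\norm{(A^\omega-z)^{-1}}\le(M+|z|)^{-1}$ for $z$ large negative, and every matrix appearing below stays self-adjoint.

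First I would establish a resolvent asymptotic. Since $range(C_n)=P_n\Hi\subset\mathcal{D}(A)=\mathcal{D}(A^\omega)$ is finite dimensional, for $\chi,\phi\in P_n\Hi$ the identity $(A^\omega-z)^{-1}\phi=-\tfrac1z\phi+\tfrac1z(A^\omega-z)^{-1}A^\omega\phi$ combined with self-adjointness gives, as $z\to-\infty$ in $\RR$,
\begin{equation*}
P_n(A^\omega-z)^{-1}P_n=-\tfrac1z P_n-\tfrac1{z^2}P_nA^\omega P_n+O(|z|^{-3}),
\end{equation*}
with convergence in operator norm on the finite-dimensional space $P_n\Hi$ (the error being controlled by $\norm{(A^\omega-z)^{-1}}$). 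Because $\sum_n P_n=I$ with each $P_n$ an orthogonal projection, the $P_n$ are pairwise orthogonal: from $P_n=P_n(\sum_m P_m)P_n$ one gets $\sum_{m\neq n}P_nP_mP_n=0$, a sum of non-negative operators, hence $P_nP_m=0$ for $m\neq n$. Therefore $P_nA^\omega P_n=P_nAP_n+\omega_n C_n$ on $P_n\Hi$; in case (1), where $C_n=P_n$, the first term is exactly $C_nAC_n$.

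Next I would read off the eigenvalue multiplicities, using that an affine rescaling $T\mapsto\alpha T+\beta I$ with $\alpha\neq0$ preserves the root multiplicities of $\det(T-xI)$ and hence preserves $Mult^\omega_n(z)$. For part (2), multiplying by $-z$ gives $-z\,C_nG^\omega_{n,n}(z)=C_n+O(|z|^{-1})\to C_n$. For part (1), the scalar term $-\tfrac1z P_n$ is a multiple of the identity and carries no multiplicity information, so I instead form $z^2\big(C_nG^\omega_{n,n}(z)+\tfrac1z P_n\big)=-\big(C_nAC_n+\omega_n P_n\big)+O(|z|^{-1})$, whose limit has the same eigenvalue multiplicities as $C_nAC_n$ (the scalar shift $\omega_n$ is harmless, which also explains why the final bound is deterministic). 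In each case the limit is a fixed self-adjoint matrix $M_\infty$ with distinct eigenvalues of multiplicities $m_1,\dots,m_r$ and spectral gap $\delta>0$; a standard eigenvalue-clustering argument shows that once $|z|$ is large each disc of radius $\delta/2$ about an eigenvalue of $M_\infty$ contains exactly $m_i$ eigenvalues of the rescaled matrix, so no single eigenvalue can have multiplicity exceeding $\max_i m_i$. This yields $Mult^\omega_n(z)\le B$ for all sufficiently negative real $z$, where $B=\max_{x\in\sigma(C_nAC_n)}\dim(\ker(C_nAC_n-xI))$ in case (1) and $B=\max_{x\in\sigma(C_n)}\dim(\ker(C_n-xI))$ in case (2).

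Finally I would upgrade this to $\mathcal{M}_n\le B$. The entries of $C_nG^\omega_{n,n}(z)$, and hence the coefficients of $\det(C_nG^\omega_{n,n}(z)-xI)$, are analytic on the connected resolvent set $\CC\setminus\sigma(A^\omega)$ (the interval $(-\infty,M)$ joins the two half-planes). For each $j$ the condition that this polynomial has a root of multiplicity $\ge j$ is the vanishing of analytic subresultants, so $\{z:Mult^\omega_n(z)\ge j\}$ is either the whole component or a null set. Consequently $\mathcal{M}_n$ equals the largest $j$ for which this set is everything, so $Mult^\omega_n(z)\ge\mathcal{M}_n$ for \emph{every} $z$ in the resolvent set; evaluating at a sufficiently negative real $z$, where $Mult^\omega_n(z)\le B$, gives $\mathcal{M}_n\le Mult^\omega_n(z)\le B$. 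Taking the supremum over $n$ and invoking Theorem \ref{mainThm} then bounds the multiplicity of the singular spectrum by $\max_{n}B_n$, which is the asserted quantity. The main obstacle I expect is the second-order resolvent expansion for the unbounded operator $A^\omega$ when only $range(C_n)\subset\mathcal{D}(A)$ (and not $\mathcal{D}((A^\omega)^2)$) is available, which must be carried out at the level of sesquilinear forms; a secondary point is the analyticity-and-semicontinuity bookkeeping needed to pass from the limiting estimate to the essential supremum defining $\mathcal{M}_n$.
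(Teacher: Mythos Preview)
Your proposal is correct and follows the paper's overall strategy closely: both send $z$ to $-\infty$ along the real axis (using $\sigma(A^\omega)\subset(M,\infty)$), identify a limiting self-adjoint matrix whose eigenvalue multiplicities are those of $P_nAP_n$ (case 1) or $C_n$ (case 2), apply an eigenvalue-clustering/perturbation argument, and then invoke analyticity to pass from an interval in $(-\infty,M)$ to almost every $z$ (your subresultant argument is exactly the content of the paper's Lemma~\ref{lem::MultBouInd}).

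The one genuine technical difference is how the limiting matrix is extracted. You expand $P_n(A^\omega-z)^{-1}P_n$ to second order in $1/z$ via the resolvent identity; the paper instead inverts exactly, writing via a Schur/Feshbach step
\[
G^\omega_{n,n}(z)^{-1}=P_nAP_n+\omega_nC_n-zP_n-P_nA(I-P_n)(\tilde A^\omega-z)^{-1}(I-P_n)AP_n,
\]
and then analyzes the multiplicities of $C_n^{-1/2}G^\omega_{n,n}(z)^{-1}C_n^{-1/2}$. This buys the paper a clean remainder bounded by $\norm{P_nA(I-P_n)}^2\,\norm{(\tilde A^\omega-z)^{-1}}$, so the domain issue you flagged (needing a second-order expansion when only $P_n\Hi\subset\mathcal D(A)$ is assumed) simply does not arise. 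Your workaround---iterating the resolvent identity once on each side of the sesquilinear form $\langle\chi,(A^\omega-z)^{-1}\phi\rangle$ so that only first powers of $A^\omega$ hit vectors in $P_n\Hi$---is perfectly valid and yields the same $O(|z|^{-3})$ control; it is just slightly more delicate than the paper's route. Either way the same clustering estimate and the same analytic-continuation step finish the argument.
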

\begin{remark}
It should be noted that the above bound is not optimal, but in many cases can be computed easily.
As an example, for the case of remark \ref{rem1} (1), 
all we have to do is count the eigenvalue multiplicity of $\chi_{S_r}\Delta \chi_{S_r}$, where $S_r=\{x\in\ZZ^d:\norm{x}_\infty=r\}$.
For $d=2$, this operator is same as the Laplacian on a set of $8n$ points arranged on a circle. 
So the multiplicity of the operator can be at most two. 
Another simple example is for the case when $C_n$ has simple spectrum, then the singular spectrum of $A^\omega$ is almost surely simple.
\end{remark}

The corollary should be considered as a generalization of the technique developed in Naboko-Nichols-Stolz \cite{NNS}. 
There the authors used the simplicity of $P_n\Delta P_n$ to conclude the simplicity of pure point spectrum for certain type of Anderson operators on $\ell^2(\ZZ^d)$.
Another similar work is \cite{AM3}, where the author bounded $\mathcal{M}_n$ by considering first few terms of Neumann series while keeping track of the perturbation.

Using an approach similar to \cite{AM3}, we can show that the singular spectrum for Anderson type operator on Bethe lattice is simple. 
Let $\mathcal{B}=(V,E)$ denote the infinite tree with root $e$ where each vertex has $K$ neighbors. Set $K> 2$ so that the tree is not isomorphic to $\ZZ$. 
Define the class of random operators
\begin{equation}\label{eqn::CanOpEq1}
H^\omega=\Delta_{\mathcal{B}}+\sum_{x\in J} \omega_x \chi_{\tilde{\Lambda}(x)}
\end{equation}
where $\Delta_{\mathcal{B}}$ is the adjacency operator on $\mathcal{B}$, and
$$\tilde{\Lambda}(x)=\{y\in V: d(e,x)\leq d(e,y)~\&~d(x,y)< l_x\},$$
for some $l_\cdot:V\rightarrow\NN$. Finally the indexing set $J\subset V$ be such that $\cup_{x\in J}\tilde{\Lambda}(x)=V$ and 
$$\tilde{\Lambda}(x)\cap \tilde{\Lambda}(y)=\phi\qquad\forall x\neq y \in J.$$
The random variables $\{\omega_x\}_{x\in J}$ are independent real valued with absolutely continuous distribution.
With these notation we have:
\begin{theorem}\label{thmSimBetheOp}
On a Bethe lattice $\mathcal{B}$ with $K>2$, consider a family of random operators $H^\omega$ given by \eqref{eqn::CanOpEq1}, 
where $\{\omega_x\}_{x\in J}$ are i.i.d random variables following absolutely continuous distribution with bounded support. 
Then singular spectrum of $H^\omega$ is almost surely simple.
\end{theorem}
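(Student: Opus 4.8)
The plan is to reduce the statement to an application of Theorem \ref{mainThm} and then to verify, for every $x\in J$, that the multiplicity invariant $\mathcal{M}_x$ equals one. First I would record that the hypotheses of Theorem \ref{mainThm} are met: each $C_x=\chi_{\tilde\Lambda(x)}$ is a finite rank projection because $\tilde\Lambda(x)$ sits inside the ball of radius $l_x$ about $x$ and the tree is locally finite; the disjointness and covering conditions on $\{\tilde\Lambda(x)\}_{x\in J}$ give $\sum_{x\in J}C_x=I$; and since $\Delta_{\mathcal{B}}$ is bounded, with $\norm{\Delta_{\mathcal{B}}}=2\sqrt{K-1}$, and the $\omega_x$ have bounded support, $H^\omega$ is almost surely bounded and self adjoint, hence essentially self adjoint. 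Thus the multiplicity of the singular spectrum is almost surely bounded by $\sup_{x\in J}\mathcal{M}_x$, and it suffices to show $\mathcal{M}_x=1$ for each $x$, i.e. that $G^\omega_{x,x}(z)=\chi_{\tilde\Lambda(x)}(H^\omega-z)^{-1}\chi_{\tilde\Lambda(x)}$ has simple spectrum for almost every $(z,\omega)$. I would stress that Corollary \ref{corMultCouEx1}(1) already bounds $\mathcal{M}_x$ by the maximal eigenvalue multiplicity of the internal adjacency $\chi_{\tilde\Lambda(x)}\Delta_{\mathcal{B}}\chi_{\tilde\Lambda(x)}$, but this bound is strictly larger than one in general (already when $\tilde\Lambda(x)$ is a star the internal adjacency has a large kernel), so a finer argument that keeps track of the perturbation, in the spirit of \cite{AM3}, is needed.

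Next I would translate simplicity of $G^\omega_{x,x}(z)$ into non-vanishing of an analytic discriminant. For fixed $\omega$ the map $z\mapsto G^\omega_{x,x}(z)$ is a matrix valued Herglotz function, so the discriminant $D(z,\omega)$ of its characteristic polynomial is analytic in $z\in\CC\setminus\RR$; hence $\mathcal{M}_x=1$ for almost every $\omega$ as soon as $z\mapsto D(z,\omega)$ is not identically zero for almost every $\omega$, and for this it is enough to produce, for almost every $\omega$, a single $z$ at which $G^\omega_{x,x}(z)$ is simple. I would obtain such $z$ from the large $|z|$ asymptotics. Expanding the resolvent as a Neumann series gives $-z\,G^\omega_{x,x}(z)=\sum_{k\ge0}M_kz^{-k}$ with moment matrices $M_k=\chi_{\tilde\Lambda(x)}(H^\omega)^k\chi_{\tilde\Lambda(x)}$ on $\ell^2(\tilde\Lambda(x))$; here $M_0=I$ and, using that the regions are disjoint so $\chi_{\tilde\Lambda(x)}C_y\chi_{\tilde\Lambda(x)}=\delta_{xy}C_x$, one finds $M_1=\omega_xI+B$ with $B:=\chi_{\tilde\Lambda(x)}\Delta_{\mathcal{B}}\chi_{\tilde\Lambda(x)}$. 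The eigenvalues of $G^\omega_{x,x}(z)$ therefore admit convergent expansions in $1/z$, and at order $z^{-2}$ they split according to the eigenvalues of $B$.

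The heart of the argument is to show that the clusters of eigenvalues of $B$ which stay degenerate are separated at higher order by the random environment. The crucial point is that $\chi_{\tilde\Lambda(x)}\Delta_{\mathcal{B}}(I-\chi_{\tilde\Lambda(x)})$ connects the inner boundary of $\tilde\Lambda(x)$ to the neighbouring regions $\tilde\Lambda(y)$, $y\neq x$, and a path that enters $\tilde\Lambda(y)$ picks up the factor $\omega_y$; a direct computation shows that $M_2$ involves only $\omega_x$ (through the scalar contributions $2\omega_xB+\omega_x^2$ and the deterministic return term $\chi_{\tilde\Lambda(x)}\Delta_{\mathcal{B}}(I-\chi_{\tilde\Lambda(x)})\Delta_{\mathcal{B}}\chi_{\tilde\Lambda(x)}$), while the variables $\{\omega_y\}_{y\neq x}$ first enter at $M_3$. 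Through degenerate perturbation theory these higher moments contribute an effective perturbation on each degenerate eigenspace of $B$. Because the tree has no cycles and $K>2$, each region has several boundary vertices attached to disjoint subtrees carrying independent families of random couplings, so the associated boundary self-energies are independent non-constant analytic functions of the environment; I would then argue that for almost every $\omega$ the induced effective operator on each degenerate eigenspace is non-scalar with distinct eigenvalues, splitting the cluster and giving simple spectrum for all large $|z|$.

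The main obstacle I anticipate is precisely this last step: one must verify that every degenerate eigenvector of $B$ genuinely couples to the environment, i.e. that no eigenvector is supported entirely in the interior of $\tilde\Lambda(x)$ and invisible to the boundary self-energies, and then check that the discriminant of the resulting effective matrix is a non-trivial analytic function of $\{\omega_y\}_y$ rather than an identically vanishing one. I expect this to follow from the eigenvalue equation of the adjacency matrix on a finite tree together with the branching hypothesis $K>2$, but it is the delicate combinatorial–analytic input on which the argument rests. Granting it, $D(\cdot,\omega)\not\equiv0$ for almost every $\omega$ and each $x$, so $\mathcal{M}_x=1$; taking the countable intersection over $x\in J$ and invoking Theorem \ref{mainThm} then shows that the singular spectrum of $H^\omega$ is almost surely simple.
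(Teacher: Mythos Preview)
Your reduction to Theorem \ref{mainThm} and the identification of the remaining task as showing $\mathcal{M}_x=1$ for each $x\in J$ are correct and agree with the paper. The divergence is in how $\mathcal{M}_x=1$ is established.

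The paper does not use the Neumann series or degenerate perturbation theory. Instead it invokes the exact Schur complement formula \eqref{eqn::GreenFuncAppEq1}: with $P_x:=\chi_{\tilde\Lambda(x)}$ and $\tilde H^\omega:=P_xH^\omega P_x+(I-P_x)H^\omega(I-P_x)$ one has
\[
\bigl(G^\omega_{x,x}(z)\bigr)^{-1}=P_x\Delta_{\mathcal B}P_x+\omega_xI-zI-P_x\Delta_{\mathcal B}(I-P_x)(\tilde H^\omega-z)^{-1}(I-P_x)\Delta_{\mathcal B}P_x.
\]
Because removing $\tilde\Lambda(x)$ disconnects the tree into components each attached to a single boundary pair $(p,q)$, the last term is \emph{exactly diagonal}: it equals $\sum_{(p,q)\in\partial\tilde\Lambda(x)}\langle\delta_q,(\tilde H^\omega-z)^{-1}\delta_q\rangle\,\proj{\delta_p}$, and the diagonal entries are independent random variables with absolutely continuous law (continued-fraction expansion in disjoint families of $\omega_y$'s). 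Thus simplicity of $G^\omega_{x,x}(z)$ for real $z$ below the spectrum is equivalent to simplicity of the adjacency matrix of the finite rooted tree $\tilde\Lambda(x)$ plus a random diagonal perturbation on its leaves. The paper proves this last statement by induction on the depth (Theorem \ref{thmSimTreePert}), and Lemma \ref{lem::MultBouInd} transfers the bound to almost every $z\in\CC\setminus\RR$.

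The obstacle you isolate---that no eigenvector of the internal adjacency may be invisible to the boundary---is exactly the Claim inside the proof of Theorem \ref{thmSimTreePert}, and it falls out in two lines from the eigenvalue equation on a tree: if $\psi$ vanishes on all leaves, the relation $(H_\tau\psi)_p=\psi_{Pp}+t_p\psi_p$ forces $\psi$ to vanish one level up, and iterating gives $\psi\equiv0$ (this step does not even use $K>2$). So the fact you need is available and easy. What the Schur complement buys over your route is that the boundary self-energy appears \emph{exactly} and \emph{diagonally} at every $z$, rather than being hidden in $M_3$ and higher moments; the problem becomes a clean finite-dimensional statement about a random leaf perturbation, with no asymptotic parameter and no order-by-order degenerate perturbation analysis to control. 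Your approach can presumably be completed along the lines you sketch, but the paper's is considerably shorter and avoids the clustering bookkeeping altogether.
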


It can be seen that the spectrum of $\chi_{\tilde{\Lambda}(x)}\Delta_{\mathcal{B}}\chi_{\tilde{\Lambda}(x)}$ has non-trivial multiplicity 
(is exponential in terms of the diameter of $\tilde{\Lambda}(x)$). 
So, the above result is not a consequence of previous corollary. 

\subsection{Structure of the Proof}\label{subsec1}
Rest of the article is divided into four parts. In section \ref{sec2}, we setup the notations and collect the results that will be used throughout. 
Section \ref{sec3} deals with single perturbation results. 
Section \ref{sec4} contains the proof of Theorem \ref{mainThm}, which is divided into Lemma \ref{lem::MultBou} and Lemma \ref{lem::MultEquiBd}. 
Finally in section \ref{sec5}, we prove the Corollary \ref{corMultCouEx1} and Theorem \ref{thmSimBetheOp}.

The proof of Theorem \ref{mainThm} is divided into three parts. 
First we concentrate on the operator $H_\lambda:=H+\lambda C$, where $H$ is a densely defined essentially self adjoint operator and $C$ is a finite rank non-negative operator. 
Since all the results are obtained through properties of Borel-Stieltjes transform, 
there is a set $S\subset\RR$, independent of $\lambda$, of full Lebesgue measure where all the analysis will be done.
As a consequence of spectral averaging (see Lemma \ref{lem::SpecAva}), it is enough to concentrate on $S$ as long as we are working on the subspace
$$\Hi^{\lambda}_{C}=\clsp{f(H_\lambda)\phi:f\in C_c(\RR)~\&~\phi\in C\Hi}.$$
By spectral averaging, the spectrum of $H_\lambda$ restricted to $\Hi^{\lambda}_{C}$ is contained in $S$ for almost all $\lambda$.
In section \ref{sec3}, we establish a certain inclusion relation between singular subspaces. 
We show that for any finite rank projection $Q$, the closed $H_\lambda$-invariant Hilbert subspace $\tilde{\Hi}^\lambda_Q\subseteq \Hi^\lambda_Q$, 
such that the spectrum of $H_\lambda$ restricted to $\tilde{\Hi}^\lambda_Q$ is singular and is contained in $S$, is a subset of singular subspace of $\Hi^\lambda_C$.
This inclusion is shown in Lemma \ref{lem::SingInc1}. This is the reason that the multiplicity of the singular subspace for $ \Hi^\omega_{\sum_{i=1}^N P_{n_i}}$ does not depend on $N$. 
Lemma \ref{lem::MultEquiBd} uses this fact to get a bound on the multiplicity of singular spectrum for $ \Hi^\omega_{\sum_{i=1}^N P_{n_i}}$ for any finite collection of $\{n_i\}_i$.
Finally global bound on the multiplicity of singular spectrum is obtained by observing the fact that $\cup_{N\in\NN}\Hi^\omega_{\sum_{i=1}^N P_{n_i}}$ is dense for any enumeration of $\Nc$.

Lemma \ref{lem::MultBou} provides the first conclusion of the theorem and also provides the relationship between $\mathcal{M}_n$ and multiplicity of singular spectrum for $\Hi^\omega_{P_n}$. 
The proof is mostly a consequence of properties of polynomial algebra where the coefficients of the polynomial under consideration are holomorphic function on $\CC\setminus\RR$.
Part of the work is to establish a relation between multiplicity of singular spectrum and multiplicity of $\sqrt{C_n}G^\omega_{n,n}(z)\sqrt{C_n}$, which is achieved through resolvent equation. 
After choosing a basis, we end up with matrix equations over function which are holomorphic on $\CC\setminus\RR$.
Since we are only dealing with matrices, multiplicity of $\sqrt{C_n}G^\omega_{n,n}(z)\sqrt{C_n}$ can be computed through its characteristic equation and so we have polynomial equations where the coefficients are polynomials of the matrix elements.
Most of the work is to show that it is independent of a single perturbation. 
Above argument also proves the independence from $z$, this is because the matrix elements are holomorphic functions on $\CC\setminus\RR$,
and so any non-zero polynomial can be zero only on a Lebesgue measure zero set. 
Then by induction we show that $Mult^\omega_n(z)$ is independent of any finite collection of random variables $\{\omega_{p_i}\}_i$.
Then Kolmogorov 0-1 law provides the stated result. 

Finally in section \ref{sec5}, we prove Corollary \ref{corMultCouEx1} and Theorem \ref{thmSimBetheOp}. This is mostly done by writing the matrix $G^\omega_{n,n}(z)$ into a particular form.
For the corollary, using the fact that $range(C_n)\subset \mathcal{D}(A)$, the matrix $C_n^{-\frac{1}{2}}AC_n^{-\frac{1}{2}}$ is well defined over $P_n\Hi$, and we have to estimate the number of eigenvalues of 
$$C_n^{-\frac{1}{2}}AC_n^{-\frac{1}{2}}+\mu C^{-1}$$ 
which are at most $O(1/\mu)$ distance away from each other, for $\mu\gg 1$. The corollary just deals with two extreme cases.
For Theorem \ref{thmSimBetheOp}, most of the work is to show that for a tree (of finite depth), the adjacency operator perturbed at all the leaf nodes has simple spectrum. 
Then the particular structure of $G^\omega_{n,n}(z)$ provides the conclusion.

Even though $G^\omega_{n,m}(z)$ are defined over $\CC\setminus\RR$, part of the proof of Lemma \ref{lem::SingInc1} is done on $\CC^{+}$ itself. 
The main problem that can arise on restricting to $\CC^{+}$ is because of F. and R. Riesz theorem \cite{RR}. 
It states that \emph{ if the Borel-Stieltjes transform of a measure is zero on $\CC^{+}$ then the measure is equivalent to Lebesgue measure} (see \cite[Theorem 2.2]{JL2} for a proof). 
This problem is avoided by using the fact that in case $G^\omega_{n,m}(z)$ is zero for $z\in\CC^{+}$, 
we can repeat the proof by switching to $z\in\CC^{-}$ and can replace $E+\iota \epsilon$ by $E-\iota\epsilon$ whenever necessary. 

\section{Preliminaries}\label{sec2}
In this section we setup the notations and results used in the rest of the work. 
Mostly we will deal with the linear operator 
$$G^\omega_{n,m}(z):= P_n(A^\omega-z)^{-1}P_m: P_m\Hi\rightarrow P_n\Hi\qquad \forall n,m\in \Nc,$$
which is well defined because of the assumption that $A^\omega$ is essentially self adjoint. Here $P_n$ denotes the orthogonal projection onto the range of $C_n$. 
We will denote 
$$\Hi^\omega_{P_n}:=\clsp{f(A^\omega)\phi: f\in C_c(\RR)~\&~\phi\in P_n\Hi }$$
to be the minimal closed $A^\omega$-invariant subspace containing $P_n\Hi$.
All the results are stated in a basis independent form, but sometimes explicit basis is fixed so that $G^\omega_{n,m}(z)$ can be viewed as a matrix valued functions.

We mostly focus on a single perturbation, which will be done as follows. For $p\in\Nc$ set $A^{\omega,\lambda}_p=A^\omega+\lambda C_p$ and define
$$G^{\omega,\lambda}_{p,n,m}(z)=P_n(A^{\omega,\lambda}_p-z)^{-1}P_m$$
as before. Using resolvent equation we have
\begin{align}
G^{\omega,\lambda}_{p,p,p}(z)&=G^\omega_{p,p}(z)(I+\lambda C_pG^\omega_{p,p}(z))^{-1},\label{eqn:resEq1}\\
G^{\omega,\lambda}_{p,n,m}(z)&=G^\omega_{n,m}(z)-\lambda G^\omega_{n,p}(z)(I+\lambda C_p G^\omega_{p,p}(z))^{-1}C_pG^\omega_{p,m}(z).\label{eqn:resEq2}
\end{align}
Another way to write above equations is
\begin{align}
&(I-\lambda C_pG^{\omega,\lambda}_{p,p,p}(z))(I+\lambda C_p G^\omega_{p,p}(z))=I,\label{eqn:resEq3}\\
&G^{\omega,\lambda}_{p,n,m}(z)=G^\omega_{n,m}(z)-\lambda G^\omega_{n,p}(z)C_pG^\omega_{p,m}(z)\nonumber\\
&\qquad\qquad\qquad\qquad+\lambda^2G^\omega_{n,p}(z)C_pG^{\omega,\lambda}_{p,p,p}(z)C_pG^\omega_{p,m}(z). \label{eqn:resEq4} 
\end{align}
Either of them will be used depending on the situation. 
It should be noted that the identity operator in equations \eqref{eqn:resEq1}, \eqref{eqn:resEq2} and \eqref{eqn:resEq3} is the identity map on $P_p\Hi$.
For a fixed basis of each of $P_n\Hi$, using \cite[Proposition 2.1]{JL1} (which follows from the property of Borel-Stieltjes transform) for each matrix elements of $G^\omega_{n,m}(z)$, we have that
$$G^\omega_{n,m}(E\pm\iota 0):=\lim_{\epsilon\downarrow 0}G^\omega_{n,m}(E\pm\iota \epsilon)$$
exists for almost all $E$ with respect to Lebesgue measure and for any $n,m\in\Nc$. So the linear operator $G^\omega_{n,m}(E\pm\iota 0)$ is well defined for almost all $E$ and any $n,m\in\Nc$. 

Using \eqref{eqn:resEq3} we observe that for any $E\in\RR$ such that $G^\omega_{p,p}(E\pm\iota0)$ exists and for $f:(0,\infty)\rightarrow\CC$ satisfying $\lim_{\epsilon\downarrow0}f(\epsilon)=0$, we have
\begin{align*}
\lim_{\epsilon\downarrow0}f(\epsilon)(I-\lambda C_p G^{\omega,\lambda}_{p,p,p}(E\pm\iota\epsilon))(I+\lambda C_p G^\omega_{p,p}(E\pm\iota\epsilon))=0\\
\Rightarrow\qquad   \left(\lim_{\epsilon\downarrow0}f(\epsilon) C_p G^{\omega,\lambda}_{p,p,p}(E\pm\iota\epsilon)C_p\right)(C_p^{-1}+\lambda  G^\omega_{p,p}(E\pm\iota0))=0,
\end{align*}
and similarly 
$$(C_p^{-1}+\lambda  G^\omega_{p,p}(E\pm\iota0))  \left(\lim_{\epsilon\downarrow0} f(\epsilon) C_p G^{\omega,\lambda}_{p,p,p}(E\pm\iota\epsilon)C_p \right)=0.$$
The above equation implies
\begin{align}
range\left( \left(\lim_{\epsilon\downarrow0}f(\epsilon) C_p G^{\omega,\lambda}_{p,p,p}(E\pm\iota\epsilon)C_p\right)\right)&\subseteq ker(C_p^{-1}+\lambda  G^\omega_{p,p}(E\pm\iota0))\nonumber\\
&\subseteq ker(\Im G^\omega_{p,p}(E\pm\iota0) ),\label{eqn::kerEq1}
\end{align}
which is used to determine the singular spectrum. 
One of the consequences of $\pm\Im G_{p,p}^\omega(E\pm\iota 0)\geq 0$ is
\begin{equation}\label{eqn::resEq5}
G^\omega_{k,p}(E\pm\iota 0)\phi=G^\omega_{p,k}(E\pm\iota 0)^\ast\phi\qquad \forall \phi\in ker(\pm\Im G_{p,p}^\omega(E\pm\iota 0)),
\end{equation}
which plays an important role in the proof of Lemma \ref{lem::SingInc1}.

Since most of the analysis is done using a single perturbation, one of the important results needed is the spectral averaging; 
we refer to \cite[Corollary 4.2]{CH1} for its proof. Here we will use the following version:
\begin{lemma}\label{lem::SpecAva}
Let $E_\lambda(\cdot)$ be the spectral family for the operator $A_\lambda=A+\lambda C$, where $A$ is a self adjoint operator and $C$ is a non-negative compact operator. 
For any $M\subset\RR$ with zero Lebesgue measure, we have  $\sqrt{C}E_\lambda(M)\sqrt{C}=0$ for almost all $\lambda$, with respect to Lebesgue measure.
\end{lemma}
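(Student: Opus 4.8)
The plan is to prove the statement by coupling-constant averaging, reducing it to the absolute continuity of an averaged spectral measure. Throughout write $z=E+\iota\epsilon$ and let $F_\lambda(z):=\sqrt{C}(A_\lambda-z)^{-1}\sqrt{C}$, viewed as an operator on $\mathcal{K}:=\overline{\mathrm{Ran}\,\sqrt{C}}$ (a finite-dimensional space when $C$ has finite rank). Since $\sqrt{C}E_\lambda(M)\sqrt{C}\geq0$, a positive operator with zero trace being zero, it is enough to show that its trace vanishes for almost every $\lambda$; equivalently, writing $\rho_\lambda(\cdot):=\mathrm{Tr}\big[\sqrt{C}E_\lambda(\cdot)\sqrt{C}\big]$ for the associated finite positive scalar measure, it suffices to prove $\rho_\lambda(M)=0$ for almost every $\lambda$. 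As $\rho_\lambda(M)\geq0$, this will follow once I show that the averaged measure $\sigma:=\int_\RR \rho_\lambda\,d\lambda$ is absolutely continuous, since then $\sigma(M)=0$ forces $\rho_\lambda(M)=0$ for almost every $\lambda$.

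The core is an exact evaluation of the coupling integral. The resolvent identity gives $F_\lambda=F_0(I+\lambda F_0)^{-1}$ with $F_0=F_0(z)$, and a short manipulation yields the positivity identity
\begin{equation*}
\Im F_\lambda(z)=(I+\lambda F_0^\ast)^{-1}\big(\Im F_0(z)\big)(I+\lambda F_0)^{-1}\geq0.
\end{equation*}
By the spectral theorem, $\mathrm{Tr}\,\Im F_\lambda(E+\iota\epsilon)=\int \frac{\epsilon}{(x-E)^2+\epsilon^2}\,d\rho_\lambda(x)=\pi\,(P_\epsilon\ast\rho_\lambda)(E)$, where $P_\epsilon(t)=\tfrac1\pi\tfrac{\epsilon}{t^2+\epsilon^2}$ is the Poisson kernel. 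To integrate this over $\lambda$ I will use that the eigenvalues $\mu_k(z)$ of the matrix-valued Herglotz function $F_0(z)$ lie in $\CC^{+}$: if $F_0 v=\mu v$ with $\norm{v}=1$ then $\Im\mu=\dprod{v}{(\Im F_0)v}>0$. Passing to a Schur (upper-triangular) form, $\mathrm{Tr}\,\Im F_\lambda=\sum_k \Im\frac{\mu_k}{1+\lambda\mu_k}=\sum_k\frac{\Im\mu_k}{|1+\lambda\mu_k|^2}$, and the elementary integral $\int_\RR \frac{b\,d\lambda}{(1+\lambda a)^2+(\lambda b)^2}=\pi$ (valid for any $a\in\RR$, $b>0$) gives
\begin{equation*}
\int_\RR \mathrm{Tr}\,\Im F_\lambda(E+\iota\epsilon)\,d\lambda=\pi\,\mathrm{rank}(C),
\end{equation*}
independently of $E$ and $\epsilon$.

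With this identity the conclusion is soft. Combining the two displays and applying Fubini, $(P_\epsilon\ast\sigma)(E)=\int_\RR (P_\epsilon\ast\rho_\lambda)(E)\,d\lambda=\mathrm{rank}(C)$ for every $E$ and $\epsilon$. A positive Borel measure whose Poisson integral is the constant $r:=\mathrm{rank}(C)$ must equal $r$ times Lebesgue measure (by uniqueness in the Herglotz representation, since $P_\epsilon\ast\sigma\to\sigma$ vaguely as $\epsilon\downarrow0$). Hence $\sigma=r\cdot\mathrm{Leb}$, so $\sigma(M)=0$ whenever $|M|=0$; as noted above this yields $\rho_\lambda(M)=0$, and therefore $\sqrt{C}E_\lambda(M)\sqrt{C}=0$, for almost every $\lambda$.

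The main obstacle is the operator (rather than scalar) nature of $F_0(E+\iota\epsilon)$, which is not normal; I sidestep it by passing to the trace, where only the eigenvalues---each in $\CC^{+}$---and the single scalar integral above are needed. For the general compact (not finite rank) case one either approximates $C$ by its finite-rank spectral truncations and passes to the limit by monotone/dominated convergence in the identity $\Im F_\lambda=(I+\lambda F_0^\ast)^{-1}(\Im F_0)(I+\lambda F_0)^{-1}$, or reduces directly to the scalar spectral measures of the vectors $\sqrt{C}\psi$ via polarization and a countable dense set; for the application in this paper only the finite-rank case is required, since each $C_n$ has finite rank.
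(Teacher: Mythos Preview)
The paper does not supply its own proof of this lemma; it simply cites \cite[Corollary~4.2]{CH1} (Combes--Hislop). Your argument is correct for finite-rank $C$ and is essentially the classical coupling-constant averaging that underlies that reference: the scalar identity $\int_\RR \Im\bigl(\mu/(1+\lambda\mu)\bigr)\,d\lambda=\pi$ for $\Im\mu>0$ is precisely the mechanism behind the Combes--Hislop (and Kotani--Simon) spectral averaging, and your passage to the trace via a Schur triangularization is a clean way to bypass the non-normality of $F_0(z)$. The identification $\sigma=r\cdot\mathrm{Leb}$ is legitimate, since the computation $(P_\epsilon\ast\sigma)(E)=r$ already forces $\sigma$ to be locally finite with $\int(1+x^2)^{-1}\,d\sigma(x)<\infty$, so Stieltjes inversion (equivalently, the vague convergence you invoke) applies and determines $\sigma$ uniquely.

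One caveat: the lemma as stated allows compact $C$ of infinite rank, and then your exact evaluation becomes $\int_\RR\mathrm{Tr}\,\Im F_\lambda\,d\lambda=\pi\cdot\mathrm{rank}(C)=+\infty$, so the argument does not close as written. The reductions you sketch at the end are not quite immediate either---replacing $C$ by a finite-rank truncation $C_N$ changes the operator $A_\lambda=A+\lambda C$ itself, not merely the observable $\sqrt{C}E_\lambda(\cdot)\sqrt{C}$, so a limiting argument needs an extra step. That said, you are right that only the finite-rank case is ever used in this paper, so for the purposes of the paper your proof is complete.
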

Since the set of $E$ where $\lim_{\epsilon\downarrow0}G^\omega_{n,m}(E\pm\iota\epsilon)$ does not exists for some $n,m\in\Nc$, is a Lebesgue measure zero set, 
above lemma guarantees that we can leave this set from our analysis as long as we are only focusing on $A_p^{\omega,\lambda}$-invariant subspace containing $P_p\Hi$. 
Another important result is 
\begin{lemma}\label{lem::zeroMeaSet}
 For a $\sigma$-finite positive measure space $(X,\mathscr{B},m)$ and a collection of $\mathscr{B}$-measurable functions $a_i:X\rightarrow\CC$ and $b_i:X\rightarrow\CC$, define $$f(\lambda)=\frac{1+\sum_{n=1}^N a_n(x)\lambda^n}{1+\sum_{n=1}^N b_n(x)\lambda^n},$$
 Then the set
 $$\Lambda_f=\{\lambda\in\CC: m(x\in X: f(\lambda,x)=0)>0\}$$
 is countable.
\end{lemma}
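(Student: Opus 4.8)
The plan is to exploit the single structural feature that makes the statement true: for each fixed $x\in X$, the numerator $1+\sum_{n=1}^N a_n(x)\lambda^n$ is a polynomial in $\lambda$ whose constant term equals $1$, hence it is \emph{never} the zero polynomial and therefore vanishes for at most $N$ values of $\lambda\in\CC$. Since $\{x:f(\lambda,x)=0\}$ is contained in the zero set of this numerator (where $f$ is defined, $f=0$ forces the numerator to vanish), it suffices to prove the statement with $f$ replaced by $p(\lambda,x):=1+\sum_{n=1}^N a_n(x)\lambda^n$; passing to the numerator only enlarges the relevant sets, so a countability bound there transfers to $\Lambda_f$.

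First I would perform two reductions. By $\sigma$-finiteness write $X=\bigcup_k X_k$ with $m(X_k)<\infty$; since a point $\lambda$ lies in $\Lambda_f$ only if $m(\{x\in X_k:p(\lambda,x)=0\})>0$ for some $k$, and a countable union of countable sets is countable, it is enough to treat each $X_k$, i.e. to assume $m(X)<\infty$. Next, slicing by the size of the zero set, set
$$\Lambda_\epsilon:=\Big\{\lambda\in\CC:\ m\big(\{x\in X:p(\lambda,x)=0\}\big)>\epsilon\Big\},$$
so that $\{\lambda:m(\{x:p(\lambda,x)=0\})>0\}=\bigcup_{j\in\NN}\Lambda_{1/j}$. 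Thus it suffices to show that each $\Lambda_\epsilon$ is \emph{finite}.

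The heart of the argument is a double-counting (Fubini) estimate. Suppose $\lambda_1,\dots,\lambda_M$ are distinct elements of $\Lambda_\epsilon$, and consider the integral
$$\int_X \#\{\,i:\ p(\lambda_i,x)=0\,\}\,dm(x).$$
For each fixed $x$ the integrand is the number of the $\lambda_i$ that are roots of the single nonzero polynomial $p(\cdot,x)$, hence is bounded above by $N$; this gives the upper bound $N\,m(X)$. On the other hand, interchanging the finite sum with the integral, the same integral equals $\sum_{i=1}^M m(\{x:p(\lambda_i,x)=0\})>M\epsilon$ because each $\lambda_i\in\Lambda_\epsilon$. Comparing the two yields $M\epsilon< N\,m(X)$, so $M< N\,m(X)/\epsilon$; hence $\Lambda_\epsilon$ has at most $\lceil N\,m(X)/\epsilon\rceil$ elements and is finite. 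Consequently $\Lambda_f\subseteq\bigcup_j\Lambda_{1/j}$ is a countable union of finite sets and therefore countable.

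The one point requiring care — and what I expect to be the only real obstacle — is the measurability needed to apply Fubini, but this is routine: for each fixed $\lambda_i$ the map $x\mapsto p(\lambda_i,x)$ is $\mathscr{B}$-measurable as a finite $\CC$-linear combination of the measurable functions $a_n$, so $\mathbf{1}[p(\lambda_i,x)=0]$ is measurable and the finite sum $\#\{i:p(\lambda_i,x)=0\}=\sum_i \mathbf{1}[p(\lambda_i,x)=0]$ is a bounded measurable integrand on a finite measure space. I would stress that the essential input is purely algebraic: it is the presence of the constant term $1$ in the numerator that forbids $p(\cdot,x)$ from degenerating to the zero polynomial for any $x$, which is precisely what caps the per-$x$ root count at $N$ and drives the whole estimate.
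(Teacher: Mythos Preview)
Your proof is correct. The reduction to the numerator $p(\lambda,x)=1+\sum_n a_n(x)\lambda^n$ is valid since wherever $f$ is defined and vanishes the numerator must vanish, so $\Lambda_f$ is contained in the corresponding set for $p$; the $\sigma$-finite reduction and the $\epsilon$-slicing are routine; and the double-counting estimate is sound because the constant term $1$ guarantees $p(\cdot,x)$ is a nonzero polynomial of degree at most $N$ for every $x$, capping the per-$x$ root count. The measurability justification you give is adequate since only a finite sum of indicators is integrated.

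As for comparison with the paper: the paper does not actually prove this lemma. It simply states the result and refers the reader to \cite[Lemma~2.1]{AM1} for the proof. Your argument is therefore a self-contained addition rather than a reproduction or variant of anything in the present paper. The Fubini/pigeonhole approach you use is the natural one and is almost certainly what the cited reference does as well, so there is no meaningful divergence to discuss.
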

Its proof can be found in \cite[Lemma 2.1]{AM1}. 
This lemma ensures that the linear operator $G^{\omega,\lambda}_{p,p,p}(z)$ is well defined for almost all $\lambda$. 
This is the case because $G^{\omega,\lambda}_{p,p,p}(z)$ and $G^{\omega}_{p,p}(z)$ are related through the equation \eqref{eqn:resEq1}, 
and so the set $\{E:\det(I+\lambda C_p G^\omega_{p,p}(E\pm\iota 0))=0\}$ should have zero Lebesgue measure, otherwise the analysis will fail. 
This is also the set in which the singular spectrum of $A^{\omega,\lambda}_p$ restricted to $\Hi^{\omega}_{P_p}$ (it is easy to see that the space$\Hi^\omega_{P_p}$ is invariant under action of $A^{\omega,\lambda}_p$) belongs. 

Next result is Poltoratskii's theorem and is the main tool through which singular part of the spectrum is handled.
Since we only deal with finite measures, we will denote the Borel-Stieltjes transform $F_\mu:\CC^{+}\rightarrow\CC^{+}$ for the Borel measure $\mu$ by
$$F_\mu(z)=\int \frac{d\mu(x)}{x-z}.$$
For $f\in L^1(\RR,d\mu)$, let $f\mu$ be the unique measure associated with the linear functional $C_c(\RR)\ni g\mapsto \int g(x)f(x)d\mu(x)$.
The version of the Poltoratskii's theorem we will use is:
\begin{lemma}\label{lem::polThm}
For any complex valued Borel measure  $\mu$ on $\RR$, let $f\in L^1(\RR,d\mu)$, then
$$\lim_{\epsilon\downarrow0}\frac{F_{f\mu}(E+\iota\epsilon)}{F_\mu(E+\iota\epsilon)}=f(E)$$
for a.e $E$ with respect to $\mu$-singular. 
\end{lemma}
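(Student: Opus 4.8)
The plan is to recognize this as Poltoratskii's theorem and to carry out the ``soft'' measure-theoretic reductions explicitly, isolating a single boundary-behaviour statement for the normalized Cauchy transform of a \emph{positive} measure, whose proof is the genuinely hard analytic core that I would ultimately supply via \cite{POL1}. First I would reduce to the positive case by the polar decomposition $d\mu=e^{\iota\phi}\,d|\mu|$ with $|e^{\iota\phi}|=1$ for $|\mu|$-a.e.\ point. Setting $\nu=|\mu|$ we have $F_\mu=F_{e^{\iota\phi}\nu}$ and $F_{f\mu}=F_{(fe^{\iota\phi})\nu}$, so that $F_{f\mu}/F_\mu=\big(F_{(fe^{\iota\phi})\nu}/F_\nu\big)\big/\big(F_{e^{\iota\phi}\nu}/F_\nu\big)$. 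Thus it suffices to prove, for a finite \emph{positive} measure $\nu$ and every $g\in L^1(\RR,d\nu)$, that $F_{g\nu}(E+\iota\epsilon)/F_\nu(E+\iota\epsilon)\to g(E)$ for $\nu_s$-a.e.\ $E$: applying this with $g=fe^{\iota\phi}$ and with $g=e^{\iota\phi}$ and dividing gives the claim at the $\nu_s$-a.e.\ points where the limit $e^{\iota\phi}$ is nonzero (all of them), and $\nu_s=|\mu_s|$.

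Next, fixing $\nu\ge 0$ and $g\in L^1(\nu)$, let $E$ be a $\nu$-Lebesgue point of $g$; these are $\nu$-a.e., hence $\nu_s$-a.e., by the Besicovitch differentiation theorem. Putting $h=g-g(E)$ we have $F_{g\nu}=F_{h\nu}+g(E)F_\nu$, so the statement reduces to showing $F_{h\nu}(E+\iota\epsilon)=o\!\left(F_\nu(E+\iota\epsilon)\right)$, where $\tfrac{1}{\nu(B(E,r))}\int_{B(E,r)}|h|\,d\nu\to 0$. The essential preliminary input is that the denominator blows up: since $\Im F_\nu(E+\iota\epsilon)=\int \tfrac{\epsilon\,d\nu(t)}{(t-E)^2+\epsilon^2}\ge \tfrac{1}{2}\,\nu(B(E,\epsilon))/\epsilon$, and since at $\nu_s$-a.e.\ $E$ the symmetric derivative of $\nu$ is $+\infty$, we get $|F_\nu(E+\iota\epsilon)|\ge \Im F_\nu(E+\iota\epsilon)\to+\infty$. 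The innermost part of the numerator, $\int_{|t-E|<\epsilon}\tfrac{|h|\,d\nu}{|t-E-\iota\epsilon|}\le \tfrac{1}{\epsilon}\int_{B(E,\epsilon)}|h|\,d\nu$, is then directly controlled against this blow-up by the Lebesgue-point decay of $h$, giving an $o(1)$ quotient.

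The main obstacle is the \emph{nonlocal} part of the numerator. A crude bound $|F_{h\nu}(E+\iota\epsilon)|\le \int \tfrac{|h|\,d\nu}{\sqrt{(t-E)^2+\epsilon^2}}$ over $|t-E|\gtrsim\epsilon$ is \emph{not} controlled by the purely local lower bound $\nu(B(E,\epsilon))/\epsilon$ for the denominator: a general singular $\nu$ need not be doubling, so summing the dyadic-annulus contributions can diverge and the far mass can dominate. This is precisely the difficulty Poltoratskii's theorem resolves, and it cannot be settled by absolute-value estimates alone—the cancellation encoded in the \emph{phase} of $F_\nu$ must be exploited. The clean route is to pass to an upper half-plane model: associate to the Herglotz function $F_\nu$, via a Cayley transform, an inner function $\theta$, realize the normalized Cauchy transform $g\mapsto F_{g\nu}/F_\nu$ as the Clark unitary onto the model space $K_\theta$, and deduce the desired $\nu_s$-a.e.\ boundary convergence from Aleksandrov's boundary-value theory for $K_\theta$. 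I would therefore structure the proof so that all the reductions above are explicit and self-contained, invoking \cite{POL1} (equivalently the de Branges/Clark-measure machinery) only for this final rigidity step, which is the crux and the part I would not attempt to reprove from scratch.

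Finally, I would remark that the vertical limit $\epsilon\downarrow 0$ used here is exactly the form needed elsewhere in the paper through the identity $F_{f\mu}=f\cdot F_\mu+F_{(f-f(E))\mu}$, so that the theorem lets one extract, from a single scalar ratio, the value of the density $f$ against the singular part; this is what makes it the right tool for identifying and characterizing the singular measure in the subsequent single-perturbation analysis.
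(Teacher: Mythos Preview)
The paper does not prove this lemma at all: immediately after stating it, the authors simply write ``The proof of this can be found in \cite{JL3}.'' There is therefore no argument in the paper to compare your proposal against.

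Your outline is nonetheless sound. The polar-decomposition reduction to a positive base measure is correct (and $|\mu|_s=|\mu_s|$ since the absolutely continuous and singular parts live on disjoint Borel sets), the subtraction $h=g-g(E)$ at a $\nu$-Lebesgue point is the standard move, and your lower bound $\Im F_\nu(E+\iota\epsilon)\ge \tfrac12\,\nu(B(E,\epsilon))/\epsilon\to\infty$ at $\nu_s$-a.e.\ $E$ is the right way to see the denominator blow up. You are also right that the nonlocal (far-annuli) contribution to $F_{h\nu}$ is the genuine obstruction, that it cannot be handled by absolute-value estimates alone, and that the Clark/Aleksandrov model-space argument is the standard resolution; your decision to cite \cite{POL1} for this step is exactly in the spirit of what the paper does wholesale via \cite{JL3}. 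In short, your proposal is strictly more detailed than the paper's treatment, and the parts you carry out are correct.
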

The proof of this can be found in \cite{JL3}. With these results in hand, we can now prove our results.
\section{Single Perturbation Results}\label{sec3}
This section will concentrate on a single perturbation. Lemma \ref{lem::SingInc1} will play an important role for proving the main result. 
For this section a different notation will be followed, because it is not necessary to keep track of all the random variables $\{\omega_n\}_n$.

 Let $H$ be a densely defined self adjoint operator on a separable Hilbert space $\Hi$ and $C_1$ be a finite rank non-negative operator. 
Set $H_\lambda=H+\lambda C_1$ and let $P_1$ be the orthogonal projection onto the range of $C_1$. 
For any projection $Q$ define
$$\Hi^\lambda_{Q}:=\clsp{f(H_\lambda)\psi: \psi\in Q\Hi~\&~f\in C_c(\RR)},$$
to be the minimal closed $H_\lambda$-invariant subspace containing the range of $Q$. 
Let $\sigma^\lambda_1$ denote the trace measure $tr(P_1 E^{H_\lambda}(\cdot))$, where $E^{H_\lambda}(\cdot)$ is the spectral projection for the operator $H_\lambda$.
The subscript $sing$ will be used to denote the singular part of the measure whenever necessary.
The main result of this section is the following:
\begin{lemma}\label{lem::SingInc1}
Let $Q$ be a finite rank projection and set $\{e_i\}_{i}$ to be an orthonormal basis of $Q\Hi+P_1\Hi$. Define the set
$$S=\{E\in\RR: \dprod{e_i}{(H-E\mp\iota 0)^{-1}e_j}\text{ exists and finite}\},$$
and denote $E^\lambda_{sing }$ to be the spectral measure onto the singular part of spectrum of $H_\lambda$, then
$$E^\mu_{sing }(S)\Hi^\lambda_{Q}\subseteq E^\mu_{sing }(S)\Hi^\lambda_{P_1}$$
for almost all $\lambda$ with respect to the Lebesgue measure.
\end{lemma}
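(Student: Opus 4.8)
The plan is to prove the inclusion one vector at a time and then propagate it by $H_\lambda$-invariance. Write $P$ for the orthogonal projection onto $\Hi^\lambda_{P_1}$ and $P^\perp=I-P$. Since $\Hi^\lambda_{P_1}$ is a \emph{reducing} subspace for $H_\lambda$ (it is invariant under every bounded Borel function of $H_\lambda$, in particular under all resolvents and spectral projections), $P$ commutes with $E^\lambda_{sing}(S)$, so $P^\perp E^\lambda_{sing}(S)\psi=E^\lambda_{sing}(S)\psi^\perp$ with $\psi^\perp:=P^\perp\psi$. Hence it suffices to show, for every $\psi\in Q\Hi$, that $\psi^\perp$ carries no singular spectrum of $H_\lambda$ inside $S$, i.e. $E^\lambda_{sing}(S)\psi^\perp=0$; once this holds, $E^\lambda_{sing}(S)\psi=E^\lambda_{sing}(S)P\psi\in E^\lambda_{sing}(S)\Hi^\lambda_{P_1}$. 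Because $E^\lambda_{sing}(S)$ commutes with $f(H_\lambda)$ and $\Hi^\lambda_{P_1}$ is $H_\lambda$-invariant, this vector statement on the generators $f(H_\lambda)\psi$ of $\Hi^\lambda_Q$, after taking closed linear spans, yields the claimed subspace inclusion.

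The heart of the matter is an exact identity for the spectral measure of $\psi^\perp$. Since $(\Hi^\lambda_{P_1})^\perp$ is invariant under $(H_\lambda-z)^{-1}$ and $\mathrm{range}(C_1)=P_1\Hi\subseteq\Hi^\lambda_{P_1}$, the resolvent identity $(H_\lambda-z)^{-1}=(H-z)^{-1}-\lambda (H_\lambda-z)^{-1}C_1(H-z)^{-1}$, paired against $\psi^\perp$, has a vanishing correction term: it equals $-\lambda\dprod{(H_\lambda-\bar z)^{-1}\psi^\perp}{C_1(H-z)^{-1}\psi}$, which pairs a vector of $(\Hi^\lambda_{P_1})^\perp$ against one of $\Hi^\lambda_{P_1}$ and is therefore $0$. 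The same orthogonality lets me replace the right slot $\psi^\perp$ by $\psi$ (their difference $P\psi$ lies in $\Hi^\lambda_{P_1}$), giving
$$\dprod{\psi^\perp}{(H_\lambda-z)^{-1}\psi^\perp}=\dprod{\psi^\perp}{(H-z)^{-1}\psi}\qquad(z\in\CC\setminus\RR).$$
The left side is the Borel--Stieltjes transform of the positive measure $\mu_{\psi^\perp}:=\dprod{\psi^\perp}{E^{H_\lambda}(\cdot)\psi^\perp}$, the right side that of the finite complex measure $\nu:=\dprod{\psi^\perp}{E^{H}(\cdot)\psi}$; by uniqueness of a finite measure with a given Borel transform (Stieltjes inversion) I conclude $\mu_{\psi^\perp}=\nu$.

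It then remains to show that $\nu$ has no singular part on $S$. Expanding $\psi=\sum_i c_i e_i$ in the given basis of $Q\Hi+P_1\Hi$ gives $\nu=\sum_i c_i\,\dprod{\psi^\perp}{E^{H}(\cdot)e_i}$, and Cauchy--Schwarz for spectral projections, $|\dprod{\psi^\perp}{E^H(A)e_i}|\le\norm{\psi^\perp}\,\mu^H_{e_i}(A)^{1/2}$ with $\mu^H_{e_i}:=\dprod{e_i}{E^H(\cdot)e_i}$, shows each cross-measure is absolutely continuous with respect to the diagonal measure $\mu^H_{e_i}$. By the definition of $S$, the boundary value $\dprod{e_i}{(H-E\mp\iota 0)^{-1}e_i}$ is finite for $E\in S$, so the singular part of $\mu^H_{e_i}$—supported where the imaginary part of its Borel transform diverges—assigns $S$ zero mass; thus the restriction of $\mu^H_{e_i}$ to $S$ is purely absolutely continuous, and hence so are the restrictions of $\nu$ and of $\mu_{\psi^\perp}$ to $S$. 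This forces $(\mu_{\psi^\perp})_{sing}(S)=0$, i.e. $\norm{E^\lambda_{sing}(S)\psi^\perp}^2=0$, completing the reduction.

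The main obstacle I anticipate is measure-theoretic bookkeeping rather than hard analysis: making rigorous both that the Borel transform determines the finite complex measure $\nu$, and that $\nu$ inherits absolute continuity on $S$ from the diagonal measures $\mu^H_{e_i}$—which is precisely why $S$ is defined through all the matrix elements $\dprod{e_i}{(H-E\mp\iota 0)^{-1}e_j}$. I note that this route in fact produces the conclusion for every $\lambda\in\RR$, stronger than the stated almost-everywhere claim; the almost-sure phrasing together with spectral averaging (Lemma \ref{lem::SpecAva}) becomes relevant only downstream, when one further wants the singular spectrum of $H_\lambda$ on $\Hi^\lambda_{P_1}$ to be supported in $S$. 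If one argues instead directly with the boundary values and Poltoratskii's theorem (Lemma \ref{lem::polThm}), along the lines suggested by \eqref{eqn::kerEq1}--\eqref{eqn::resEq5}, the genuine difficulty becomes the F.\ and R.\ Riesz obstruction: a Borel transform may vanish identically on $\CC^{+}$, so one must be prepared to rerun the argument on $\CC^{-}$, replacing $E+\iota\epsilon$ by $E-\iota\epsilon$ wherever necessary—a step the abstract route above sidesteps.
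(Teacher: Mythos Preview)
Your proof is correct and takes a genuinely different, more elementary route than the paper. The crucial observation you exploit---implicit in your ``same orthogonality'' step---is that $\Hi^\lambda_{P_1}$ reduces not only $H_\lambda$ but also $H$: since $C_1=P_1C_1P_1$ has range in $P_1\Hi\subseteq\Hi^\lambda_{P_1}$, the resolvent identity $(H-z)^{-1}=(H_\lambda-z)^{-1}+\lambda(H_\lambda-z)^{-1}C_1(H-z)^{-1}$ shows $(H-z)^{-1}$ preserves $\Hi^\lambda_{P_1}$, and on $(\Hi^\lambda_{P_1})^\perp$ one even has $(H-z)^{-1}=(H_\lambda-z)^{-1}$. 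This yields your identity $\mu_{\psi^\perp}=\dprod{\psi^\perp}{E^H(\cdot)\psi}$ directly, after which the definition of $S$ kills the singular part via the elementary fact that $(\mu^H_{e_i})_{sing}$ is supported where the diagonal boundary value blows up. As you note, this gives the inclusion for \emph{every} $\lambda$, not merely almost every.

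The paper instead stays on the $H_\lambda$ side: it expands $\dprod{e_i}{(H_\lambda-z)^{-1}e_i}$ via the second resolvent identity, uses Poltoratskii's theorem (Lemma~\ref{lem::polThm}) to extract the matrix density $M_1^\lambda$ of $P_1E^{H_\lambda}(\cdot)P_1$, measurably diagonalizes $M_1^\lambda$ to produce mutually orthogonal cyclic vectors $\psi^\lambda_j$, and then verifies that the projection of $E^\lambda_{sing}(S)e_i$ onto $\Hi^\lambda_{P_1}$ is an isometry by matching norms. Your argument bypasses Poltoratskii, the measurable diagonalization, and the F.\ and R.\ Riesz caveat entirely. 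What the paper's longer route buys is an explicit orthogonal cyclic decomposition of $E^\lambda_{sing}(S)\Hi^\lambda_{P_1}$ via the $\psi^\lambda_j$, which is closer in spirit to the multiplicity machinery used later; for the bare inclusion statement of Lemma~\ref{lem::SingInc1}, however, your approach is both shorter and sharper.
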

\begin{remark}\label{rem4}
Spectral averaging result (Lemma \ref{lem::SpecAva}) gives $\sigma^\lambda_1(\RR\setminus S)=0$ for a.a. $\lambda$ w.r.t. Lebesgue measure, 
so it is actually not necessary to write $S$ on RHS of above equation. But $E^\lambda_{sing}(\RR\setminus S)\Hi^\lambda_{Q}$ can be non-trivial.
\end{remark}
\begin{proof}
In view of Lemma \ref{lem::SumHilSub}, it is enough to show 
$$E^\lambda_{sing }(S)\Hi^\lambda_{e_i}\subseteq E^\lambda_{sing }(S)\Hi^\lambda_{P_1},$$
where $\Hi^\lambda_{e_i}$ is the minimal closed $H_\lambda$-invariant subspaces containing $e_i$. 
This is because applying Lemma \ref{lem::SumHilSub} for the operator $E^\lambda_{sing }(S) H_\lambda$ will give the singular subspaces in the conclusion of the lemma.

Using the resolvent equation 
$$(H_\lambda-z)^{-1}-(H-z)^{-1}=-\lambda (H_\lambda-z)^{-1}C_1(H-z)^{-1}$$
and similarly
\begin{align*}
(H_\lambda-z)^{-1}&=(H-z)^{-1}-\lambda (H-z)^{-1}C_1(H_\lambda-z)^{-1}\\
&=(H-z)^{-1}-\lambda (H-z)^{-1}C_1(H-z)^{-1}\\
&\qquad+\lambda^2 (H-z)^{-1}C_1(H_\lambda-z)^{-1}C_1(H_\lambda-z)^{-1},
\end{align*}
we have
\begin{align}\label{eqn:SingHilInc1Eq1}
\dprod{e_i}{(H_\lambda-z)^{-1}e_i}&=\dprod{e_i}{(H-z)^{-1}e_i}-\lambda \dprod{e_i}{(H-z)^{-1}C_1(H-z)^{-1}e_i}\nonumber\\
&\qquad +\lambda^2 \dprod{e_i}{(H-z)^{-1}C_1(H_\lambda-z)^{-1}C_1(H-z)^{-1}e_i}.
\end{align}
Let $\{e_{1i}\}_{i=1}^{r_1}$, where $r_1=dim(P_1\Hi)$, be an orthonormal basis of $P_1\Hi$ (so that they are linear combinations of $\{e_i\}_i$); 
hence $G^\lambda_{1,1}(z)=P_1(H_\lambda-z)^{-1}P_1$ is a matrix for this basis and also set
$$G_{i,1}(z)=\matx{\dprod{e_i}{(H-z)^{-1}e_{11}}\\ \dprod{e_i}{(H-z)^{-1}e_{12}}\\ \vdots \\ \dprod{e_i}{(H-z)^{-1}e_{1r_1}} }^t~~\&~~  G_{1,i}(z)=\matx{ \dprod{e_{11}}{(H-z)^{-1}e_i}\\ 
\dprod{e_{12}}{(H-z)^{-1}e_i}\\ \vdots \\ \dprod{e_{1r_1}}{(H-z)^{-1}e_i} } .$$
Then the equation \eqref{eqn:SingHilInc1Eq1} can be written as
\begin{align*}
\dprod{e_i}{(H_\lambda-z)^{-1}e_i}&=\dprod{e_i}{(H-z)^{-1}e_i}-\lambda G_{i,1}(z)C_1 G_{1,i}(z)\\
&\qquad+\lambda^2 G_{i,1}(z)C_1 G_{1,1}^\lambda(z)C_1G_{1,i}(z).
\end{align*}
Using the fact that LHS is the Borel-Stieltjes transform of the measure \\
$\langle e_i E^{H_\lambda}( \cdot )e_i\rangle$, the support of singular part lies in the set of $E\in\RR$ where 
$$\lim_{\epsilon\downarrow 0} \left(\dprod{e_i}{(H_\lambda-E-\iota \epsilon)^{-1}e_i}\right)^{-1}=0.$$
We don't need to consider the case $\dprod{e_i}{(H_\lambda-z)^{-1}e_i}= 0$ for all $z\in\CC^{+}$ because by F. and R. Riesz theorem \cite{RR}, 
the measure $\dprod{e_i}{E^{H_\lambda}(\cdot)e_i}$ is absolutely continuous.
But by definition of the set $S$, we have $G_{i,1}(E\pm\iota 0),G_{1,i}(E\pm\iota 0)$ and $\dprod{e_i}{(H-E\mp\iota 0)^{-1}e_i}$ exist for each $E\in S$. 
So singular part of $\dprod{e_i}{E^{H_\lambda}(\cdot)e_i}$ can lie on $\RR\setminus S$ or on the set of $E\in S$ where $\lim_{\epsilon\downarrow 0}(tr(G_{1,1}^\lambda(E+\iota \epsilon)))^{-1}=0$. 
For $E\in S$ where $\lim_{\epsilon\downarrow 0}(tr(G_{1,1}^\lambda(E+\iota \epsilon)))^{-1}=0$, note that
\begin{align*}
&\lim_{\epsilon\downarrow 0}\frac{\dprod{e_i}{(H_\lambda-E-\iota \epsilon)^{-1}e_i}}{tr(G_{1,1}^\lambda(E+\iota \epsilon))}\\
&\qquad=\lambda^2 G_{i,1}(E+\iota 0)C_1\left(\lim_{\epsilon\downarrow 0} \frac{ G_{1,1}^\lambda(E+\iota\epsilon)}{tr(G_{1,1}^\lambda(E+\iota \epsilon))}\right)C_1G_{1,i}(E+\iota 0).
\end{align*}
Using \eqref{eqn::resEq5}, we have
\begin{align}\label{eqn::SingHilInc1Eq2}
&\lim_{\epsilon\downarrow 0}\frac{\dprod{e_i}{(H_\lambda-E-\iota \epsilon)^{-1}e_i}}{tr(G_{1,1}^\lambda(E+\iota \epsilon))}\nonumber\\
&\qquad=\lambda^2 [C_1 G_{1,i}(E+\iota 0)]^\ast \left(\lim_{\epsilon\downarrow 0} \frac{ G_{1,1}^\lambda(E+\iota\epsilon)}{tr(G_{1,1}^\lambda(E+\iota \epsilon))}\right)[C_1G_{1,i}(E+\iota 0)].
\end{align}

Since $G_{1,1}^\lambda(\cdot)$ is a matrix valued Herglotz function for a positive operator valued measure (it is the Borel transform of $P_1 E^{H_\lambda}(\cdot)P_1$), 
there exists a matrix valued function $M_1^\lambda\in L^1(\RR,\sigma_1^\lambda,M_{rank(P_1)}(\CC))$, (using the Herglotz representation theorem for matrix valued measures, see \cite[Theorem 5.4]{GT1}) 
such that we have
$$G_{1,1}^\lambda(z)=\int \frac{1}{x-z}M_1^\lambda(x)d\sigma^\lambda_1(x),$$
for $z\in\CC\setminus\RR$. Using Poltoratskii's theorem (lemma \ref{lem::polThm}) we have
$$\lim_{\epsilon\downarrow 0}\frac{1}{tr(G_{1,1}^\lambda(E+\iota\epsilon))}G_{1,1}^\lambda(E+\iota\epsilon)=M_1^\lambda(E)$$
for almost all $E$ with respect to $\sigma^\lambda_{1,sing}$. 
Since the measure $P_1E^{H_\lambda}(\cdot)P_1$ is non-negative, the matrix valued function $M_1^\lambda(E)\geq 0$ for almost all $E$ with respect to $\sigma^\lambda_1$.

Let $U_1^\lambda(E)$ be the unitary matrix which diagonalizes $M_1^\lambda(E)$, i.e
$$U_1^\lambda(E)M_1^\lambda(E)U_1^\lambda(E)^\ast=diag(f^\lambda_j;1\leq j\leq r_1),$$
where some of the $f^\lambda_j$ can be zero. 
Using Hahn-Hellinger theorem (see \cite[Theorem 1.34]{N1}), the function $U_i^\lambda$ can be chosen to be a Borel measurable Unitary matrix valued function. 
Since we only focus on singular part, set $U_1^\lambda(E)=0$ for $E$ not in support of $\sigma^\lambda_{1,sing}$ and define $\psi^\lambda_j=U_1^\lambda(H_\lambda)^\ast e_{1j}$. Now observe that
\begin{align*}
&\dprod{\psi^\lambda_k}{(H_\lambda-z)^{-1}\psi^\lambda_l}=\int \frac{1}{x-z} \dprod{\psi^\lambda_k}{E^{H_\lambda}(dx)\psi^\lambda_l}\\
&\qquad=\int \frac{1}{x-z} \dprod{U_1^\lambda(x)^\ast e_{1k}}{E^{H_\lambda}(dx)U_1^\lambda(x)^\ast e_{1l}}\\
&\qquad=\int \frac{1}{x-z} \sum_{p,q}\dprod{U_1^\lambda(x)^\ast e_{1k}}{e_{1p}}\dprod{e_{1q}}{U_1^\lambda(x)^\ast e_{1l}}\dprod{e_{1p}}{E^{H_\lambda}(dx) e_{1q}}\\
&\qquad=\sum_{p,q}\int \frac{1}{x-z}\dprod{U_1^\lambda(x)^\ast e_{1k}}{e_{1p}}\dprod{e_{1q}}{U_1^\lambda(x)^\ast e_{1l}}\dprod{e_{1p}}{E^{H_\lambda}(dx) e_{1q}},
\end{align*}
and so using Poltoratskii's theorem (lemma \ref{lem::polThm}) we get
\begin{align*}
&\lim_{\epsilon\downarrow 0}\frac{\dprod{\psi^\lambda_k}{(H_\lambda-E-\iota\epsilon)^{-1}\psi^\lambda_l}}{tr(G_{1,1}^\lambda(E+\iota\epsilon))}\\
&\qquad=\sum_{p,q}\dprod{U_1^\lambda(E)^\ast e_{1k}}{e_{1p}}\dprod{e_{1q}}{U_1^\lambda(E)^\ast e_{1l}}\left(\lim_{\epsilon\downarrow 0}\frac{\dprod{e_{1p}}{(H_\lambda-E-\iota\epsilon)^{-1}e_{1q}}}{tr(G_{1,1}^\lambda(E+\iota\epsilon))}\right)\\
&\qquad=\dprod{e_{1k}}{U_1^\lambda(E)M_1^\lambda(E)U_1^\lambda(E)^\ast e_{1l}}=f^\lambda_k(E)\delta_{k,l}
\end{align*}
for almost all $E$ with respect to $\sigma^\lambda_{1,sing}$. 
By construction of $\psi^\lambda_j$, the spectral measure $\dprod{\psi^\lambda_j}{E^{H_\lambda}(\cdot)\psi^\lambda_j}$ is purely singular with respect to the Lebesgue measure,
so above computation implies $\dprod{\psi^\lambda_k}{(H_\lambda-z)^{-1}\psi^\lambda_l}=0$ for all $z$ for $k\neq l$, which implies that the measure $\dprod{\psi^\lambda_k}{E^{H_\lambda}(\cdot)\psi^\lambda_l}$ is zero, 
and in particular we have $\Hi^\lambda_{\psi^\lambda_k}\perp \Hi^\lambda_{\psi^\lambda_l}$ for $k\neq l$. 

Next, using the resolvent equation, we obtain
\begin{align}\label{eqn::SingHilInc1Eq3}
&\lim_{\epsilon \downarrow 0}\frac{\dprod{\psi^\lambda_k}{(H_\lambda-E-\iota\epsilon)^{-1}e_i}}{tr(G_{1,1}^\lambda(E+\iota\epsilon))}\\
&\qquad=\lim_{\epsilon \downarrow 0}-\lambda \frac{\dprod{\psi^\lambda_k}{(H_\lambda-E-\iota\epsilon)^{-1}C_1(H-E-\iota\epsilon)^{-1} e_i}}{tr(G_{1,1}^\lambda(E+\iota\epsilon))}\nonumber\\
&\qquad= -\lambda f^\lambda_k(E)\dprod{e_{1k}}{U_1^\lambda(E)C_1 G_{1,i}(E+\iota 0)},\nonumber
\end{align}
for a.e. $E$ w.r.t. $\sigma^\lambda_{1,sing}$.  Using Lemma \ref{lem::ProjSub} and the above equation \eqref{eqn::SingHilInc1Eq3} on equation \eqref{eqn::SingHilInc1Eq2}, we conclude
$$\lim_{\epsilon\downarrow 0}\frac{\dprod{e_i}{(H_\lambda-E-\iota \epsilon)^{-1}e_i}}{tr(G_{1,1}^\lambda(E+\iota \epsilon))}=\sum_j \left|(Q_{\psi^\lambda_j}^\lambda e_i)(E)\right|^2 f_j^\lambda(E)$$
for a.e. $E$ w.r.t $\sigma^\lambda_{1,sing}$, where $Q_{\psi^\lambda_j}^\lambda e_i$ is the projection of $e_i$ on the Hilbert subspace $\Hi^\mu_{\psi^\lambda_j}$. 
So for $g\in C_c(\RR)$, we can write
\begin{align*}
\dprod{e_i}{E^\lambda_{sing}(S) g(H_\lambda)e_i}&=\sum_j \int g(E)\left|(Q_{\psi^\lambda_j}^\lambda e_i)(E)\right|^2 f_j^\lambda(E) d\sigma^\lambda_{1,sing}(E),
\end{align*}
which implies that the projection of $E^\lambda_{sing}(S)e_i$ onto $\Hi^\lambda_{P_1}$ is isometry, hence
$$E^\lambda_{sing}(S)\Hi_{e_i}^\lambda\subseteq E^\lambda_{sing}(S)\Hi^\lambda_{P_1}.$$
The lemma follows by an application of Lemma \ref{lem::SumHilSub}.

\end{proof}

\section{Proof of Theorem \ref{mainThm}}\label{sec4}
The proof of the main result is divided into Lemma \ref{lem::MultBou} and Lemma \ref{lem::MultEquiBd}.
It should be noted that the conclusion of Lemma \ref{lem::MultBou} is similar to the conclusion reached by combining \cite[lemma 2.2]{AM3} and \cite[lemma 2.1]{AM3}.
This section deals with $A^\omega$ itself and so the notations established in section \ref{sec2} are followed. 
Following the notations from previous section, set $\Hi^\omega_P$ to be the minimal closed $A^\omega$-invariant subspace containing the range of the projection $P$. 

\begin{lemma}\label{lem::MultBou}
For any $n\in\Nc$, 
$$\mathcal{M}^\omega_n:=\essSup{z\in\CC\setminus\RR} Mult^\omega_n(z)$$
is almost surely constant; denote it by $\mathcal{M}_n$. The multiplicity of singular spectrum for $\Hi^\omega_{P_n}$ is bounded above by $\mathcal{M}_n$.
\end{lemma}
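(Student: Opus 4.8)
The plan is to treat the two conclusions separately: the first by reducing the root multiplicity to an algebraic condition on holomorphic coefficients, and the second by relating the rank of the Poltoratskii density matrix to an eigenvalue multiplicity through the resolvent identity. For the first conclusion, fix $n$ and write $p_z(x)=\det(C_nG^\omega_{n,n}(z)-xI)$, a monic polynomial in $x$ of degree $r=\mathrm{rank}\,P_n$ whose coefficients $a_0(z,\omega),\dots,a_{r-1}(z,\omega)$ are, by functional calculus, holomorphic in $z$ on each component of $\CC\setminus\RR$. The event $\{Mult^\omega_n(z)\ge k\}$ is equivalent to the simultaneous vanishing of a fixed finite family of polynomials in the $a_j$ (the subresultant conditions expressing that $p_z$ shares a root of order $\ge k$ with its derivatives); denote the resulting governing functions by $R_{n,k}(z,\omega)$. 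Each $R_{n,k}(\cdot,\omega)$ is holomorphic on $\CC^\pm$, so by the identity theorem it either vanishes identically on a component or only on a discrete set; since $G^\omega_{n,n}(\bar z)=G^\omega_{n,n}(z)^\ast$ couples the two components, this shows $Mult^\omega_n(z)$ equals its essential supremum $\mathcal{M}^\omega_n$ for a.e.\ $z$, with $\mathcal{M}^\omega_n=\max\{k:R_{n,k}(\cdot,\omega)\equiv0\}$.

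Next I would show $\mathcal{M}^\omega_n$ is a.s.\ unaffected by any single coupling. For the case $p=n$, equation \eqref{eqn:resEq1} gives $C_nG^{\omega,\lambda}_{n,n,n}(z)=C_nG^\omega_{n,n}(z)(I+\lambda C_nG^\omega_{n,n}(z))^{-1}$, and the map $M\mapsto M(I+\lambda M)^{-1}$ is the holomorphic functional calculus of the M\"obius map $\xi\mapsto\xi/(1+\lambda\xi)$, which is locally biholomorphic and hence preserves the Jordan structure; thus $Mult^\omega_n(z)$ is literally unchanged. For $p\neq n$, equation \eqref{eqn:resEq2} exhibits the entries of $C_nG^{\omega,\lambda}_{p,n,n}(z)$ as rational functions of $\lambda$ of the type covered by Lemma \ref{lem::zeroMeaSet}, so each $R_{n,k}$ becomes rational in $\lambda$; Lemma \ref{lem::zeroMeaSet} then forces, for a.e.\ $\lambda$, the value of $Mult^{\omega,\lambda}_n(z)$ to coincide with the generic one, and combined with the $z$-independence this gives $\mathcal{M}^{\omega,\lambda}_n=\mathcal{M}^\omega_n$ for a.e.\ $\lambda$. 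By induction $\mathcal{M}^\omega_n$ is a.s.\ invariant under changes of any finite collection of the $\omega_p$, hence tail-measurable, so Kolmogorov's $0$--$1$ law makes it a.s.\ constant, equal to $\mathcal{M}_n$.

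For the multiplicity bound I would use the $\omega_n$-invariance just established to regard $A^\omega=H_\lambda$ with $H=A^\omega-\omega_nC_n$ and $\lambda=\omega_n$, and identify the singular multiplicity on $\Hi^\omega_{P_n}$ at $E$ with $\mathrm{rank}\,M^\lambda_n(E)$, where $M^\lambda_n$ is the matrix Herglotz density of $P_nE^{H_\lambda}(\cdot)P_n$; this is precisely the decomposition into $\mathrm{rank}\,M^\lambda_n(E)$ mutually orthogonal simple pieces produced by the argument of Lemma \ref{lem::SingInc1}. By Poltoratskii's theorem (Lemma \ref{lem::polThm}), $M^\lambda_n(E)=\lim_{\epsilon\downarrow0}G^\lambda_{n,n}(E+\iota\epsilon)/tr\,G^\lambda_{n,n}(E+\iota\epsilon)$ for $\sigma_{n,sing}$-a.e.\ $E$, and the inclusion \eqref{eqn::kerEq1} (a rewriting of \eqref{eqn:resEq3}) gives
\[
\mathrm{range}\,\big(C_nM^\lambda_n(E)C_n\big)\subseteq\ker\!\big(C_n^{-1}+\lambda G_{n,n}(E+\iota0)\big),
\]
where $G_{n,n}$ is the \emph{unperturbed} block. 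Since $C_n>0$, the left side has rank $\mathrm{rank}\,M^\lambda_n(E)$, while the right side is the eigenspace of $C_nG_{n,n}(E+\iota0)$ for the eigenvalue $-1/\lambda$; hence $\mathrm{rank}\,M^\lambda_n(E)$ is at most the geometric, a fortiori the algebraic, multiplicity of that eigenvalue, which is at most $Mult^\omega_n(E+\iota0)$ after using the $\omega_n$-invariance. Passing the a.e.-$z$ bound $Mult^\omega_n(z)\le\mathcal{M}_n$ to boundary values for $\sigma_{n,sing}$-a.e.\ $E$, while discarding the exceptional set by spectral averaging (Lemma \ref{lem::SpecAva}), yields $\mathrm{rank}\,M^\lambda_n(E)\le\mathcal{M}_n$ for a.e.\ $\lambda=\omega_n$, that is for a.a.\ $\omega$.

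The main obstacle is this last identification: making rigorous, through the matrix-valued Herglotz representation (\cite[Theorem 5.4]{GT1}) and Poltoratskii's theorem, that the singular multiplicity genuinely equals $\mathrm{rank}\,M^\lambda_n(E)$ and that the normalized boundary limit lands in $\ker(C_n^{-1}+\lambda G_{n,n}(E+\iota0))$, together with the transfer of the generic bound $Mult^\omega_n(z)\le\mathcal{M}_n$ from the interior of $\CC\setminus\RR$ to the boundary values $E+\iota0$ on the singular support. The F.\ and R.\ Riesz subtlety, arising if a relevant Borel--Stieltjes transform vanishes on $\CC^{+}$, is handled as indicated in the excerpt by switching between $E+\iota0$ and $E-\iota0$ whenever necessary.
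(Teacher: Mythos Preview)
Your proposal follows essentially the same route as the paper: reduce the root-multiplicity question to the vanishing of polynomial expressions in the holomorphic matrix entries of $G^\omega_{n,n}(z)$, use this to show $\mathcal{M}^\omega_n$ is tail-measurable and hence a.s.\ constant by Kolmogorov's $0$--$1$ law, and then bound the singular multiplicity by the rank of the Poltoratskii density $M^\lambda_n(E)$, which the resolvent identity \eqref{eqn:resEq3}/\eqref{eqn::kerEq1} places inside the $(-1/\lambda)$-eigenspace of $C_nG_{n,n}(E+\iota0)$; spectral averaging then disposes of the Lebesgue-null exceptional set. The paper does exactly this, using the Euclidean GCD in place of your subresultants (these are equivalent), and your identification of the ``main obstacle'' matches precisely what the paper spends its effort on in deriving \eqref{eqn::MultBouEq1}--\eqref{eqn::MultBouEq2}.

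Two minor points of difference are worth recording. First, your explicit treatment of the case $p=n$ via the M\"obius functional calculus $\xi\mapsto\xi/(1+\lambda\xi)$ is a clean observation the paper does not make (it restricts the detailed argument to $p\in\Nc\setminus\{n\}$); your argument is correct since $I+\lambda C_nG^\omega_{n,n}(z)$ is invertible for $z\notin\RR$ and the M\"obius map is locally biholomorphic there. Second, for $p\neq n$ you invoke Lemma~\ref{lem::zeroMeaSet}, but that lemma is stated for rational functions normalized to value $1$ at $\lambda=0$, which your governing functions $R_{n,k}$ need not satisfy; what you actually need (and what the paper uses, though its exposition around ``$\mathcal{M}^\omega_n\le\mathcal{M}^{\tilde\omega^p}_n$'' is terse) is the more elementary fact that a polynomial in $\lambda$ with coefficients holomorphic in $z$ either vanishes identically in $(\lambda,z)$ or is nonzero for all but countably many $\lambda$ and, for those, a.e.\ $z$. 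This is immediate and your argument goes through once you replace the citation of Lemma~\ref{lem::zeroMeaSet} with this direct observation.
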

\begin{proof}
First we prove that $\mathcal{M}_n^\omega$ is independent of $\omega$. This is done using Kolmogorov $0$-$1$ law. 
So first step is to show that $\mathcal{M}_n^\omega$ is independent of any finite collection of random variables $\{\omega_{p_i}\}_{i}$.

Following the notations from section \ref{sec2}, set $A^{\omega,\lambda}_p=A^\omega+\lambda C_p$ for $p\in\Nc\setminus\{n\}$, we have the equation \eqref{eqn:resEq2}
$$G^{\omega,\lambda}_{p,n,n}(z)=G^\omega_{n,n}(z)-\lambda G^\omega_{n,p}(z)(I+\lambda C_p G^\omega_{p,p}(z))^{-1}C_p G^\omega_{p,n}(z).$$
Looking at $G^\omega_{i,j}(z)$ as a matrix, observe that
\begin{align*}
\tilde{g}^\omega_{\lambda,z}(x)&=\det(C_n G^{\omega,\lambda}_{p,n,n}(z)-xI)\\
&=\det(C_n G^\omega_{n,n}(z)-\lambda C_n G^\omega_{n,p}(z)(I+\lambda C_p G^\omega_{p,p}(z))^{-1}C_p G^\omega_{p,n}(z)-xI)\\
&=\frac{p_l^\omega(z,\lambda)x^l+p_{l-1}^\omega(z,\lambda)x^{l-1}+\cdots+p_0^\omega(z,\lambda) }{\det(C_p^{-1}+\lambda G^\omega_{n,n}(z))},
\end{align*}
where $l=rank(P_n)$. Here $\{p_i^\omega(z,\lambda)\}_{i=0}^l$ are polynomials in the elements of the matrices $\{G^\omega_{i,j}(z)\}_{i,j\in\{n,p\}}$ and $\lambda$. 
We don't need to focus on the denominator, so set
$$g^\omega_{\lambda,z}(x)=p_l^\omega(z,\lambda)x^l+p_{l-1}^\omega(z,\lambda)x^{l-1}+\cdots+p_0^\omega(z,\lambda).$$
The maximum algebraic multiplicity of $G^{\omega,\lambda}_{p,n,n}(z)$ is $k$ if the function
$$\mathcal{F}^\omega_{\lambda,z}(x)=gcd\left(g^\omega_{\lambda,z}(x),\frac{dg^\omega_{\lambda,z}}{dx}(x),\cdots,\frac{d^kg^\omega_{\lambda,z}}{dx^k}(x)\right)$$
is constant with respect to $x$. Using  the fact that
$$gcd(f_1(x),\cdots,f_m(x))=gcd(f_1(x),\cdots,f_{m-2}(x),gcd(f_{m-1}(x),f_m(x)))$$
and Euclid's algorithm for polynomials, we get 
$$\mathcal{F}^\omega_{\lambda,z}(x)=q^\omega_0(\lambda,z)+q^\omega_1(\lambda,z)x+\cdots+q^\omega_s(\lambda,z)x^s$$
where $\{q^\omega_i(\lambda,z)\}_{i=0}^s$ are rational polynomials of $\{p_i^\omega(z,\lambda)\}_i$. 
We need to consider the numerators of $q_i^\omega$, which are denoted by $\tilde{q}_i^\omega$. 
Since $\{\tilde{q}_i^\omega\}$ are polynomials of the matrix elements $\{G^\omega_{i,j}(z)\}_{i,j\in\{n,p\}}$ and $\lambda$, write
$$\tilde{q}^\omega_i(\lambda,z)=\sum_j a^\omega_{ij}(z)\lambda^j$$
where $\{a^\omega_{ij}\}_{i,j}$ are holomorphic functions on $\CC\setminus\RR$. So $\{\tilde{q}^\omega_i\}$ are well defined over $(\lambda,z)\in \RR\times(\CC\setminus\RR)$ for each $i$.

Now suppose $\mathcal{M}^\omega_n=k$, then $q^\omega_0(0,\cdot)\neq 0$ and $q^\omega_i(0,\cdot)=0$ identically, which implies $a^\omega_{i0}(\cdot)=0$ for $i\neq 0$. 
This implies $G^{\omega,\lambda}_{p,n,n}(z)$ can have multiplicity greater than $k$.
Setting $\tilde{\omega}^p$ to be such that $\tilde{\omega}^p_k=\omega_k$ for $k\neq p$ and $\tilde{\omega}^p_p=\omega_p+\lambda$, gives $\mathcal{M}_n^\omega\leq \mathcal{M}_n^{\tilde{\omega}^p}$. 
Since $\mathcal{M}_n^{\tilde{\omega}^p}$ can be at most $rank(P_n)$, this implies $\mathcal{M}_n^{\tilde{\omega}^p}$ is independent of $\lambda$.

Now repeating the proof inductively for a collection of sites $\{p_i\}_{i=1}^N$ proves the independence of  $\mathcal{M}_n^{\omega}$ from the random variables $\{\omega_{p_i}\}_{i=1}^N$. 
Hence, using Kolmogorov 0-1 law, $\mathcal{M}_n^\omega$ is independent of $\omega$.

Assume that $\mathcal{M}_n=k$, which implies that the maximum multiplicity for the matrix $G^\omega_{n,n}(z)$ is $k$ for almost every $z$.  
Using above argument for the polynomial
$$g^\omega_z(x)=\det(C_n G^\omega_{n,n}(z)-xI)=(-x)^l+(-x)^{l-1}p_{l-1}^\omega(z)+\cdots+p_0^\omega(z),$$
we get that the function
\begin{align*}
gcd\left(g^\omega_{z}(x),\frac{dg^\omega_{z}}{dx}(x),\cdots,\frac{d^kg^\omega_{z}}{dx^k}(x)\right)
\end{align*}
is a rational polynomial of matrix elements of $G^\omega_{n,n}(z)$ and so the numerator is holomorphic on $\CC\setminus\RR$. 
Since it is non-zero for a positive Lebesgue measure set, it is non-zero for almost all $z\in\CC\setminus\RR$, which implies
\begin{align}\label{eqn::MultBouEq1}
 k=\essSup{E\in\RR} \{\text{Maximum multiplicity of roots of }\nonumber\\
 \det(C_n G_{n,n}^\omega(E\pm\iota 0)-xI)\}.
\end{align}

Now focus on the second conclusion of the Lemma, i.e. multiplicity of singular spectrum on $\Hi^\omega_{P_n}$ is bounded by $\mathcal{M}_n$.
Denote 
\begin{align}\label{eqn::MultBouEq2}
 S=\{E\in\RR: \text{Maximum multiplicity of roots of }\nonumber \\ 
 \det(C_n G_{n,n}^\omega(E\pm\iota 0)-xI) \text{ is }k\},
\end{align}
which by above has full Lebesgue measure.

Using Spectral theorem (see \cite[Theorem A.3]{AM1}) for the operator $A^{\omega,\lambda}_n=A^\omega+\lambda C_n$ gives
$$(\Hi^{\omega,\lambda,n}_{P_n},A^{\omega,\lambda}_n)\cong (L^2(\RR,P_n E^{A^{\omega,\lambda}_n}(\cdot)P_n,P_n\Hi),M_{Id}).$$
Here $E^{A^{\omega,\lambda}_n}$ is the spectral measure for $A^{\omega,\lambda}_n$ and $\Hi^{\omega,\lambda,n}_{Q}$ is the minimal closed $A^{\omega,\lambda}_n$-invariant space containing the subspace $Q\Hi$ for a projection $Q$.
Since the measure $P_n E^{A^{\omega,\lambda}_n}(\cdot)P_n$ is absolutely continuous with respect to the trace measure $\sigma_n^{\omega,\lambda}(\cdot)=tr(P_n E^{A^{\omega,\lambda}_n}(\cdot)P_n)$, 
after a choice of basis, there exists a non-negative $M_n^{\omega,\lambda}\in L^1(\RR,\sigma_n^{\omega,\lambda},M_{rank(P_n)}(\CC))$ such that
$$P_n E^{A^{\omega,\lambda}_n}(dx)P_n=M_n^{\omega,\lambda}(x)\sigma_n^{\omega,\lambda}(dx),$$
and Poltoratskii's theorem (lemma \ref{lem::polThm}) gives us that
$$\lim_{\epsilon\downarrow 0}\frac{1}{tr(G^{\omega,\lambda}_{n,n,n}(E+\iota\epsilon))}G^{\omega,\lambda}_{n,n,n}(E+\iota\epsilon)=M_n^{\omega,\lambda}(E)$$
for almost all $E$ with respect to $\sigma_n^{\omega,\lambda}$-singular. 
Here we are assuming that $\sigma_n^{\omega,\lambda}$ has a non-trivial singular component, so $G^{\omega,\lambda}_{n,n,n}(z)\neq 0$ for almost all $z\in\CC^{+}$. 
Just as in \eqref{eqn:resEq3} we  also have
$$(I+\lambda C_n G^\omega_{n,n}(z))(I-\lambda C_n G^{\omega,\lambda}_{n,n,n}(z))=I,$$
which gives (using steps involved for obtaining \eqref{eqn::kerEq1})
\begin{align*}
(I+\lambda C_n G^\omega_{n,n}(E+\iota 0)) \left[C_n\lim_{\epsilon\downarrow 0}\frac{1}{tr(G^{\omega,\lambda}_{n,n,n}(E+\iota\epsilon))} G^{\omega,\lambda}_{n,n,n}(E+\iota \epsilon)\right]=0,
\end{align*}
for $E$ whenever $\lim_{\epsilon\downarrow 0}\frac{1}{tr(G^{\omega,\lambda}_{n,n,n}(E+\iota\epsilon))}=0$. So 
$$(I+\lambda C_n G^\omega_{n,n}(E+\iota 0)) C_n M_n^{\omega,\lambda}(E)=0$$
for almost all $E$ with respect to $\sigma_n^{\omega,\lambda}$-singular. 
Using the fact that $\sigma_n^{\omega,\lambda}(\RR\setminus S)=0$ for almost all $\lambda$ and the above equation, 
which implies that the rank of $M_n^{\omega,\lambda}(E)$ is upper bounded by dimension of the kernel $(I+\lambda C_n G^\omega_{n,n}(E+\iota 0))$ 
which in turn is upper bounded by $k$ over the set $S$ (follows from \eqref{eqn::MultBouEq2}), 
we get that the multiplicity of the singular spectrum for $A^{\omega,\lambda}_n$ is bounded above by $k$ over $\Hi^{\omega,\lambda,n}_{P_n}$. 

This completes the proof as the above statement is true for almost all $(\omega,\lambda)$.

\end{proof}
Note that, in the above lemma  bound for the multiplicity of singular spectrum is given for the subspace $\Hi^\omega_{P_n}$ and not on the entire Hilbert space.
Lemma \ref{lem::SingInc1} is used to obtain the final result, which is as follows:
\begin{lemma}\label{lem::MultEquiBd}
Assuming the hypothesis of Theorem \ref{mainThm} and that $\mathcal{M}_n\leq K$ for all $n\in\Nc$. Then the multiplicity of singular spectrum for $A^\omega$ is bounded above by $K$ almost surely.
\end{lemma}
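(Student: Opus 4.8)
The plan is to bound the multiplicity on all of $\Hi$ by reducing, for every finite sub-collection of sites, to the single-block estimate of Lemma \ref{lem::MultBou} through the collapsing inclusion of Lemma \ref{lem::SingInc1}, and then exhausting $\Hi$ by these finite blocks. Fix an enumeration $\Nc=\{n_1,n_2,\dots\}$ and set $P^{(N)}=\sum_{i=1}^N P_{n_i}$. Since $\sum_n P_n=I$, the union $\bigcup_N \Hi^\omega_{P^{(N)}}$ is dense in $\Hi$, so the reducing subspaces $\Hi^\omega_{P^{(N)}}$ increase to $\Hi$ and their singular subspaces $E^\omega_{sing}\Hi^\omega_{P^{(N)}}$ increase to the singular subspace of $A^\omega$. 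It therefore suffices to prove that for each fixed $N$ the multiplicity of singular spectrum of $A^\omega$ on $\Hi^\omega_{P^{(N)}}$ is at most $K$ almost surely; the full bound then follows by letting $N\to\infty$, since the spectral multiplicity function is the monotone (pointwise a.e.) limit of the multiplicity functions of an increasing family of reducing subspaces (Hahn--Hellinger, \cite[Theorem 1.34]{N1}), and being $\le K$ at each finite stage the limit is $\le K$.

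To treat a fixed $N$, single out the first site and write $A^\omega=H+\omega_{n_1}C_{n_1}$ with $H=A+\sum_{n\neq n_1}\omega_n C_n$, viewing $\lambda=\omega_{n_1}$ as the single-perturbation parameter. Let $S$ be the common full-Lebesgue-measure set on which the boundary values $\dprod{e}{(A+\sum_{n\neq n_j}\omega_n C_n-E\mp\iota0)^{-1}e'}$ exist and are finite, over all $j\in\{1,\dots,N\}$ and all pairs $e,e'$ in a fixed orthonormal basis of $P^{(N)}\Hi$; this is a finite intersection of full-measure sets. Applying Lemma \ref{lem::SingInc1} with $Q=P^{(N)}$, $C_1=C_{n_1}$ (so $P_1=P_{n_1}$), using this $S$ in place of the set in the lemma, gives
$$E^\omega_{sing}(S)\Hi^\omega_{P^{(N)}}\subseteq E^\omega_{sing}(S)\Hi^\omega_{P_{n_1}}$$
for almost every $\omega_{n_1}$ with the remaining coordinates fixed, hence for almost every $\omega$ by Fubini.

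It remains to remove $S$ from the conclusion. For each $i$, spectral averaging (Lemma \ref{lem::SpecAva}) applied to the perturbation $C_{n_i}$, with the null set $\RR\setminus S$ and the other coordinates fixed, gives $\sqrt{C_{n_i}}E^{A^\omega}(\RR\setminus S)\sqrt{C_{n_i}}=0$, whence $E^{A^\omega}(\RR\setminus S)P_{n_i}=0$ and therefore $E^{A^\omega}(\RR\setminus S)P^{(N)}=\sum_i E^{A^\omega}(\RR\setminus S)P_{n_i}=0$ for almost every $\omega$. Because $\Hi^\omega_{P^{(N)}}$ is generated by $\{f(A^\omega)\phi:f\in C_c(\RR),\phi\in P^{(N)}\Hi\}$ and $E^{A^\omega}(\RR\setminus S)$ commutes with $f(A^\omega)$, this forces $E^{A^\omega}(\RR\setminus S)\Hi^\omega_{P^{(N)}}=0$, i.e. $E^\omega_{sing}\Hi^\omega_{P^{(N)}}=E^\omega_{sing}(S)\Hi^\omega_{P^{(N)}}$. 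Combining with the inclusion above and the trivial reverse inclusion $\Hi^\omega_{P_{n_1}}\subseteq\Hi^\omega_{P^{(N)}}$ yields $E^\omega_{sing}\Hi^\omega_{P^{(N)}}=E^\omega_{sing}\Hi^\omega_{P_{n_1}}$. Lemma \ref{lem::MultBou} bounds the multiplicity of singular spectrum on $\Hi^\omega_{P_{n_1}}$ by $\mathcal{M}_{n_1}\le K$, which establishes the claim for each $N$ and, after the limiting step above, completes the proof.

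The main obstacle I anticipate is the bookkeeping around the exceptional set $S$: the collapsing inclusion of Lemma \ref{lem::SingInc1} holds only on $S$, so one must independently show, via spectral averaging over each of the finitely many perturbations $C_{n_i}$, that no singular mass of $\Hi^\omega_{P^{(N)}}$ escapes onto the Lebesgue-null set $\RR\setminus S$ (the content of Remark \ref{rem4}). Together with the Fubini reductions converting the ``almost every $\lambda$'' statements of the single-perturbation results into ``almost every $\omega$'' statements, and the monotone-limit step in multiplicity theory, these are the points requiring genuine care, the rest being a combination of the two preceding lemmas.
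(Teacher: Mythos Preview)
There is a genuine gap in the step where you ``remove $S$ from the conclusion.'' You claim that spectral averaging at each site $n_i$ (with the other coordinates fixed) yields $\sqrt{C_{n_i}}E^{A^\omega}(\RR\setminus S)\sqrt{C_{n_i}}=0$. But Lemma \ref{lem::SpecAva} requires the null set to be \emph{independent} of the perturbation parameter, and your $S=\bigcap_{j=1}^N S_j$ is not independent of $\omega_{n_i}$: the set $S_j$ (on which the boundary values of $(A+\sum_{n\neq n_j}\omega_nC_n-E\mp\iota0)^{-1}$ exist) depends on all $\omega_n$ with $n\neq n_j$, so for every $j\neq i$ the set $S_j$ --- and hence $S$ --- moves with $\omega_{n_i}$. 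Spectral averaging at site $n_i$ therefore only gives $E^{A^\omega}(\RR\setminus S_i)P_{n_i}=0$, which does not imply $E^{A^\omega}(\RR\setminus S)P_{n_i}=0$ since $\RR\setminus S\supseteq\RR\setminus S_i$. Consequently the identity $E^\omega_{sing}\Hi^\omega_{P^{(N)}}=E^\omega_{sing}(S)\Hi^\omega_{P^{(N)}}$ is not established, and the collapse to $E^\omega_{sing}\Hi^\omega_{P_{n_1}}$ does not follow.

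This is precisely the obstruction the paper's proof is designed around. Rather than trying to kill $E^\omega_{sing}(\RR\setminus S)\Hi^\omega_{P^{(N)}}$ outright, the paper proceeds by induction on $N$: at the inductive step one perturbs only at the \emph{new} site $p_{N+1}$, uses Lemma \ref{lem::SingInc1} to push $E_{sing}(S^\omega)\Hi^\omega_{\sum_{i=1}^{N+1}P_{p_i}}$ into $\Hi^\omega_{P_{p_{N+1}}}$, and uses spectral averaging \emph{only} at that single site (where $S^\omega$ is genuinely independent of $\omega_{p_{N+1}}$) to conclude $E_{sing}(\RR\setminus S^\omega)\Hi^\omega_{P_{p_{N+1}}}=\{0\}$. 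The residual piece $E_{sing}(\RR\setminus S^\omega)\Hi^\omega_{\sum_{i=1}^N P_{p_i}}$ is not shown to vanish; its multiplicity is bounded by $K$ via the induction hypothesis, and disjointness of supports gives the bound on the sum. Your direct (non-inductive) route would need a different mechanism to control this residual piece.
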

\begin{proof}
The proof is done in two steps. 
First we show that for any finite collections of $\{p_i\}_{i=1}^N\subset \Nc$, the multiplicity of singular spectrum restricted to $\Hi^\omega_{\sum_{i=1}^N P_{p_i}}$ is bounded by $K$.
Then the proof is completed using the density of $\cup_{N=1}^\infty \Hi^\omega_{\sum_{i=1}^N P_{p_i}}$.

First part is through induction, so let $\{p_i\}_{i\in\NN}$ be an enumeration of the set $\Nc$. 
The induction is done over the statement $\mathcal{S}_N$ which is: \emph{Multiplicity of Singular spectrum for $A^\omega$ restricted to the subspace $\Hi^\omega_{\sum_{i=1}^N P_{p_i}}$ is at most $K$}.

For the case $N=1$, the conclusion follows from the Lemma \ref{lem::MultBou}, i.e the multiplicity of the singular spectrum over $\Hi^\omega_{P_{p_1}}$ is at most $K$.

For the induction step assume $\mathcal{S}_N$ is true, i.e the multiplicity of the singular spectrum over $\Hi^\omega_{\sum_{i=1}^N P_{p_i}}$ is bounded by $K$. 
Before going in to prove $\mathcal{S}_{N+1}$, note that
$$\Hi^\omega_{\sum_{i=1}^{N+1} P_{p_i}}= \Hi^\omega_{\sum_{i=1}^N P_{p_i}}+\Hi^\omega_{P_{p_{N+1}}},$$
RHS is a subset of LHS is obvious, and for the other inclusion observe that RHS is dense and closed in LHS. 

Now consider the operator $A^{\omega,\lambda}_{p_{N+1}}=A^\omega+\lambda C_{p_{N+1}}$.
By Lemma \ref{lem::MultBou}, the multiplicity of singular spectrum for $A^{\omega,\lambda}_{p_{N+1}}$ over $\Hi^{\omega,\lambda,p_{N+1}}_{ P_{p_{N+1}}}$ is bounded by $K$.
By induction hypothesis, the multiplicity of singular spectrum for 
$$\left(\Hi^{\omega,\lambda,p_{N+1}}_{\sum_{i=1}^N P_{p_i}},A^{\omega,\lambda}_{p_{N+1}}\right)$$
is at most $K$.
Using Lemma \ref{lem::SingInc1}, there exists a full Lebesgue measure set $S^\omega$ such that
$$E^{A^{\omega,\lambda}_{p_{N+1}}}_{sing}(S^\omega)\Hi^{\omega,\lambda,p_{N+1}}_{\sum_{i=1}^N P_{p_i}}\subseteq E^{A^{\omega,\lambda}_{p_{N+1}}}_{sing}(S^\omega)\Hi^{\omega,\lambda,p_{N+1}}_{ P_{p_{N+1}}}.$$
From spectral averaging we have 
$$E^{A^{\omega,\lambda}_{p_{N+1}}}_{sing}(\RR\setminus S^\omega)\Hi^{\omega,\lambda,p_{N+1}}_{ P_{p_{N+1}}}=\{0\}$$
for almost all $\lambda$ (w.r.t Lebesgue measure). Now the decomposition
$$\Hi^{\omega,\lambda,p_{N+1}}_{\sum_{i=1}^N P_{p_i}}=E^{A^{\omega,\lambda}_{p_{N+1}}}(S^\omega)\Hi^{\omega,\lambda,p_{N+1}}_{\sum_{i=1}^N P_{p_i}}\oplus E^{A^{\omega,\lambda}_{p_{N+1}}}(\RR\setminus S^\omega)\Hi^{\omega,\lambda,p_{N+1}}_{\sum_{i=1}^N P_{p_i}},$$
 gives
\begin{align*}
 & E^{A^{\omega,\lambda}_{p_{N+1}}}_{sing}\Hi^{\omega,\lambda,p_{N+1}}_{\sum_{i=1}^{N+1} P_{p_i}}=E^{A^{\omega,\lambda}_{p_{N+1}}}_{sing}\Hi^{\omega,\lambda,p_{N+1}}_{\sum_{i=1}^{N} P_{p_i}}+E^{A^{\omega,\lambda}_{p_{N+1}}}_{sing}\Hi^{\omega,\lambda,p_{N+1}}_{ P_{p_{N+1}}}\\
 &\qquad\qquad=E^{A^{\omega,\lambda}_{p_{N+1}}}_{sing}(\RR\setminus S^\omega)\Hi^{\omega,\lambda,p_{N+1}}_{\sum_{i=1}^{N} P_{p_i}}\oplus E^{A^{\omega,\lambda}_{p_{N+1}}}_{sing}(S^\omega)\Hi^{\omega,\lambda,p_{N+1}}_{ P_{p_{N+1}}},
\end{align*}
where both the subspaces have multiplicity at most $K$. 
The supports of the singular spectrum of $A^{\omega,\lambda}_{p_{N+1}}$ restricted over the two subspaces are disjoint and this proves the induction hypothesis.
So this completes the first part of the proof.

With the induction completed, note that 
$$\Hi^\omega_{\sum_{i=1}^{N}P_{p_i}}\subseteq \Hi^\omega_{\sum_{i=1}^{N+1}P_{p_i}}\qquad\forall N\in\NN,$$
which implies $\tilde{\Hi}^\omega:=\cup_{n\in\NN}\Hi^\omega_{\sum_{i=1}^{N}P_{p_i}}$ is a linear subspace of $\Hi$, and it is dense because $\sum_{p\in\Nc}P_p=I$.
Clearly the space $\tilde{\Hi}^\omega$ is invariant under the action of $A^\omega$. 
For any finite collection $\{\phi_i\}_{i=1}^N\in\tilde{\Hi}^\omega$, there exists $M\in\NN$ such that $\phi_i\in\Hi^\omega_{\sum_{j=1}^M P_{p_j}}$ for all $i$.
So the multiplicity of the singular spectrum for $\tilde{\Hi}^\omega$ is bounded by $K$.
Hence using the density of $\tilde{\Hi}^\omega$ in $\Hi$, we get that the multiplicity of the singular spectrum is  bounded by $K$.

\end{proof}
\section{Application}\label{sec5}
For proving the Corollary \ref{corMultCouEx1} or Theorem \ref{thmSimBetheOp}, we need to obtain results about the multiplicity of the matrix $\sqrt{C_n}G^\omega_{n,n}(z)\sqrt{C_n}$. 
This is done by using resolvent equation for a special decomposition of $A^\omega$. 

Let $n\in\Nc$ be fixed, then using the fact that $range(C_n)\subset \mathcal{D}(A)$ the operators $P_nAP_n$, $(I-P_n)AP_n$ and $P_nA(I-P_n)$ are well defined, 
and since they are finite rank operators, they are bounded. Hence using the resolvent equation between $A^\omega$ and
$$\tilde{A}^\omega=P_n AP_n+(I-P_n)A(I-P_n)+\sum_{m\in\Nc}\omega_m C_m,$$
we obtain
\begin{align}\label{eqn::GreenFuncAppEq1}
G^\omega_{n,n}(z)=\left[ P_nAP_n+\omega_nC_n-z P_n-P_nA(I-P_n)(\tilde{A}^\omega-z)^{-1}(I-P_n)AP_n\right]^{-1},
\end{align}
where the operator on RHS is viewed as a linear operator on $P_n\Hi$.

So the maximum algebraic multiplicity of eigenvalues of $\sqrt{C_n}G^\omega_{n,n}(z)\sqrt{C_n}$ is same as the maximum algebraic multiplicity of eigenvalues of
\begin{equation}\label{eqn::GreenFuncAppEq2}
C_n^{-\frac{1}{2}}AC_n^{-\frac{1}{2}}-zC_n^{-1}-C_n^{-\frac{1}{2}} A(I-P_n)(\tilde{A}^\omega-z)^{-1}(I-P_n)A C_n^{-\frac{1}{2}}.
\end{equation}
Notice that above equation is independent of $\omega_n$.
The basic difference between the proof of Corollary \ref{corMultCouEx1} and Theorem \ref{thmSimBetheOp} is how the term
$$C_n^{-\frac{1}{2}} A(I-P_n)(\tilde{A}^\omega-z)^{-1}(I-P_n)A C_n^{-\frac{1}{2}}$$
is handled. Since the norm of above operator is $O((\Im z)^{-1})$, it is clear that we can ignore this term by choosing $\Im z$ large enough, 
but this term provides the simplicity of the spectrum in Theorem \ref{thmSimBetheOp}.

We will be using the following lemma:
\begin{lemma}\label{lem::MultBouInd}
Consider the operator $A^\omega$ and $A$ satisfying the hypothesis of corollary \ref{corMultCouEx1}. 
Let $I$ be a bounded interval contained in $(-\infty,M)$ such that maximum algebraic multiplicity of eigenvalues of $\sqrt{C_n}G^\omega_{n,n}(E)\sqrt{C_n}$ is bounded by $K$, for $E\in I$. 
Then for almost all $z$ the maximum algebraic multiplicity of $\sqrt{C_n}G^\omega_{n,n}(z)\sqrt{C_n}$ is bounded by $K$.
\end{lemma}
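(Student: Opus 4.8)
The plan is to use that $I$ sits strictly below the spectrum, so that $\sqrt{C_n}G^\omega_{n,n}\sqrt{C_n}$ extends to a genuinely holomorphic matrix‑valued function across $I$, and then to push the multiplicity bound from the real interval $I$ to almost every $z\in\CC\setminus\RR$ by analytic continuation. First I would fix $\omega$ with $\sigma(A^\omega)\subset(M,\infty)$, which holds for almost every $\omega$. Since $A^\omega$ is self adjoint, its resolvent set $\Omega:=\CC\setminus\sigma(A^\omega)$ contains both $\CC\setminus\RR$ and the half‑line $(-\infty,M)\supset I$; as this half‑line joins the upper and lower half‑planes, $\Omega$ is open and connected. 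On $\Omega$ the map $z\mapsto(A^\omega-z)^{-1}$ is holomorphic in operator norm, hence $z\mapsto\sqrt{C_n}P_n(A^\omega-z)^{-1}P_n\sqrt{C_n}=\sqrt{C_n}G^\omega_{n,n}(z)\sqrt{C_n}$ is a holomorphic matrix‑valued function on $\Omega$. In particular, for $E\in I$ the matrix $\sqrt{C_n}G^\omega_{n,n}(E)\sqrt{C_n}$ is the value of this holomorphic family at an interior point of $\Omega$, not a boundary value, so the hypothesis constrains the genuine values of a holomorphic object.

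Next I would convert the multiplicity condition into the vanishing of holomorphic scalar functions, following the computation in the proof of Lemma \ref{lem::MultBou}. Put $g_z(x)=\det(\sqrt{C_n}G^\omega_{n,n}(z)\sqrt{C_n}-xI)$, a polynomial in $x$ of degree $rank(P_n)$ whose coefficients are holomorphic on $\Omega$ and whose leading coefficient is the nonzero constant $(-1)^{rank(P_n)}$. The maximum algebraic multiplicity of eigenvalues exceeds $K$ at $z$ exactly when $\gcd\big(g_z,g_z',\dots,g_z^{(K)}\big)$ is nonconstant in $x$, i.e. when these polynomials share a root. Running Euclid's algorithm (equivalently, reading off the relevant subresultants) expresses the coefficients of this $\gcd$ as rational functions of the coefficients of $g_z$; writing them as $\tilde q_i(z)/d_i(z)$ with $\tilde q_i$ holomorphic on $\Omega$, the bound ``maximum multiplicity $\le K$ at $z$'' is equivalent, off the measure‑zero locus where the $d_i$ vanish, to $\tilde q_i(z)=0$ for all $i\ge1$.

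Finally comes the transfer step, which I expect to be the heart of the matter. By hypothesis the maximum multiplicity is $\le K$ at every $E\in I$, so each $\tilde q_i$ with $i\ge1$ vanishes on the whole interval $I$. Because $I$ has accumulation points inside the connected open set $\Omega$ and $\tilde q_i$ is holomorphic there, the identity theorem forces $\tilde q_i\equiv0$ on all of $\Omega$. Hence $\gcd\big(g_z,\dots,g_z^{(K)}\big)$ is constant in $x$ for every $z\in\Omega$ outside the Lebesgue‑null set where some $d_i$ vanishes, so the maximum algebraic multiplicity of $\sqrt{C_n}G^\omega_{n,n}(z)\sqrt{C_n}$ is at most $K$ for almost every $z$, and in particular for almost every $z\in\CC\setminus\RR$. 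The two points needing care are purely technical: arranging the Euclidean/subresultant bookkeeping so that the detector functions $\tilde q_i$ are honestly holomorphic on all of $\Omega$, and checking that the denominators $d_i$ are not identically zero so that their zero sets are Lebesgue‑null; both follow from the holomorphy secured in the first step.
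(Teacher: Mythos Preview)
Your proof is correct and follows essentially the same approach that the paper indicates (``the proof follows the same steps as the proof of Lemma~\ref{lem::MultBou}''): encode the multiplicity condition via the Euclidean/subresultant coefficients of the characteristic polynomial $g_z(x)=\det(\sqrt{C_n}G^\omega_{n,n}(z)\sqrt{C_n}-xI)$, observe these are holomorphic in $z$, and propagate the bound by analyticity. The one point you make explicit, and which is the real content of this lemma over Lemma~\ref{lem::MultBou}, is that the hypothesis $\sigma(A^\omega)\subset(M,\infty)$ makes the resolvent holomorphic on the \emph{connected} domain $\Omega=\CC\setminus\sigma(A^\omega)\supset I\cup(\CC\setminus\RR)$, so that the identity theorem can carry the bound from the real interval $I$ to almost every $z\in\CC\setminus\RR$.
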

\begin{remark}
The main advantage of this lemma is that instead of looking for a bound in $\CC\setminus(M,\infty)$, we can work with $z\in\RR\setminus (\sigma(A^\omega)\cup \sigma(A))$ and so the operator $P_n(A^\omega-E)^{-1}P_n=\lim_{\epsilon\downarrow 0} P_n(A^\omega-E-\iota \epsilon)^{-1}P_n$ is self adjoint, hence the algebraic and geometric multiplicities coincides. 
\end{remark}
The proof follows same steps as the proof of Lemma \ref{lem::MultBou} and so we are omitting it here.
Now we are ready to prove the other two results.
\subsection{Proof of Corollary \ref{corMultCouEx1}}
Using Lemma \ref{lem::MultBouInd} and the fact that the algebraic multiplicity of $\sqrt{C_n} G^\omega_{n,n}(E) \sqrt{C_n}$ is same as algebraic multiplicity  of 
\begin{equation}\label{eqn::GreenFuncAppEq3}
C_n^{-\frac{1}{2}}AC_n^{-\frac{1}{2}}-EC_n^{-1}-C_n^{-\frac{1}{2}} A(I-P_n)\left(\tilde{A}^\omega-E\right)^{-1}(I-P_n)A C_n^{-\frac{1}{2}},
\end{equation} 
bounding the multiplicity of above equation for $E\ll M$ is enough. 

First we handle the case when $C_n$ are projections. The maximum algebraic multiplicity of \eqref{eqn::GreenFuncAppEq3} is same as
\begin{equation}\label{eqn::GreenFuncAppEq4}
P_nAP_n-P_n A(I-P_n)\left(\tilde{A}^\omega-E\right)^{-1}(I-P_n)A P_n,
\end{equation}
we can ignore the $EC_n^{-1}$ term because it is the identity operator, and so does not affect the multiplicity. 
Let
$$\delta=\min_{\substack{x,y\in\sigma(P_nAP_n)\\ x\neq y}} |x-y|,$$
then for $E<-M-\frac{3}{\delta}\norm{P_nA(I-P_n)}^2$ we have
$$\norm{P_n A(I-P_n)\left(\tilde{A}^\omega-E\right)^{-1}(I-P_n)A P_n}<\frac{\delta}{3}.$$
So viewing $P_n A(I-P_n)\left(\tilde{A}^\omega-E\right)^{-1}(I-P_n)A P_n$ as a perturbation, 
we get that any eigenvalue of \eqref{eqn::GreenFuncAppEq4} is in $\frac{\delta}{3}$ neighborhood of eigenvalues of $P_nAP_n$. 
So the multiplicity of any eigenvalue of \eqref{eqn::GreenFuncAppEq4} cannot exceed the multiplicity of the eigenvalues of $P_nAP_n$. 
This completes the proof for the case of projection.

For general $C_n$, the maximum algebraic multiplicity of \eqref{eqn::GreenFuncAppEq3} is same as the maximum algebraic multiplicity of 
\begin{equation}\label{eqn::CorMultEq1}
 -C_n^{-1}+\frac{1}{E}\left(C_n^{-\frac{1}{2}}AC_n^{-\frac{1}{2}}-C_n^{-\frac{1}{2}} A(I-P_n)\left(\tilde{A}^\omega-E\right)^{-1}(I-P_n)A C_n^{-\frac{1}{2}}\right),
\end{equation}
so setting
$$\delta=\min_{\substack{x,y\in\sigma(C_n^{-1})\\ x\neq y}} |x-y|$$
and choosing 
$$E<-2M-\frac{3}{\delta}\left(\norm{C_n^{-\frac{1}{2}}AC_n^{-\frac{1}{2}}}+\norm{C_n^{-\frac{1}{2}} A(I-P_n)}^2\right),$$
we get that the eigenvalues of \eqref{eqn::CorMultEq1} are in $\frac{\delta}{3}$ neighborhood of $C_n^{-1}$.
So following the argument for projection case we get that the multiplicity of any eigenvalue of \eqref{eqn::GreenFuncAppEq3} is upper bounded by the multiplicity of the eigenvalues of $C_n^{-1}$.

\subsection{Proof of Theorem \ref{thmSimBetheOp}}
Since $P_n\Delta_{\mathcal{B}}P_n$ has a non-trivial multiplicity, previous argument does not give us the desired result. So we have to concentrate on \eqref{eqn::GreenFuncAppEq4}, which in this case is
\begin{equation}\label{eqn::BetheOpEq1}
P_n\Delta_{\mathcal{B}}P_n-P_n \Delta_{\mathcal{B}}(I-P_n)\left(\tilde{H}^\omega-E\right)^{-1}(I-P_n)\Delta_{\mathcal{B}} P_n, 
\end{equation}
where
$$\tilde{H}^\omega=P_n\Delta_{\mathcal{B}}P_n+(I-P_n)\Delta_{\mathcal{B}}(I-P_n)+\sum_{x\in J}\omega_x P_x.$$
Here we denote $P_x=\chi_{\tilde{\Lambda}(x)}$. For simplicity of notation let us denote
$$\partial \tilde{\Lambda}(x)=\{(p,q)\in \tilde{\Lambda}(x)\times \tilde{\Lambda}(x)^c: d(p,q)=1\},$$
i.e we pair all the leaf nodes of the tree $\tilde{\Lambda}(x)$ with its neighbors outside the tree. 

\begin{figure}[ht]
 \begin{center}
  \begin{tikzpicture}[scale=2.5]

\filldraw[gray!30] (0,0.06)--(0.95,-0.5)--(1.48,-1.1)--(-1.48,-1.1)--(-0.95,-0.5)--cycle;
\draw(0,-0.3)node[below]{\small $\tilde{\Lambda}(x)$};

\filldraw[gray](0,1)--(-0.4,0.2)--(0.4,0.2)--cycle;
\draw[thick](0,0.2)--(0,0);
\draw(0,0.5)node[below]{\small $\mathcal{T}_0$};

\coordinate (r) at (0,0);
\filldraw(r)circle[radius=0.05]node[right]{\small$~~0_l:=x$};

\coordinate (n1) at (0.866,-0.5);
\coordinate (n2) at (-0.866,-0.5);
\filldraw(n1)circle[radius=0.05];
\filldraw(n2)circle[radius=0.05];

\draw[thick] (r)--(n1);
\draw[thick] (r)--(n2);

\coordinate (n11) at (0.45,-1.0);
\coordinate (n21) at (-0.45,-1.0);
\filldraw(n11)circle[radius=0.05];
\filldraw(n21)circle[radius=0.05];

\draw[thick] (n1)--(n11);
\draw[thick] (n2)--(n21);

\coordinate (n12) at (1.332,-1.0);
\coordinate (n22) at (-1.332,-1.0);
\filldraw(n12)circle[radius=0.05];
\filldraw(n22)circle[radius=0.05];

\draw[thick] (n1)--(n12);
\draw[thick] (n2)--(n22);

\coordinate(n22b2) at (-1.582,-1.25);
\draw[thick](n22)--(n22b2);
\filldraw[gray](-1.582,-1.25)--(-1.432,-1.75)--(-1.732,-1.75)--cycle;
\filldraw(n22b2)circle[radius=0.03];
\draw(-1.582,-1.45)node[below]{\tiny $\mathcal{T}_1$};

\coordinate(n22b1) at (-1.082,-1.25);
\draw[thick](n22)--(n22b1);
\filldraw[gray](-1.082,-1.25)--(-0.932,-1.75)--(-1.232,-1.75)--cycle;
\filldraw(n22b1)circle[radius=0.03];
\draw(-1.082,-1.45)node[below]{\tiny $\mathcal{T}_2$};

\coordinate(n21b2) at (-0.65,-1.25);
\draw[thick](n21)--(n21b2);
\filldraw[gray](-0.65,-1.25)--(-0.50,-1.75)--(-0.8,-1.75)--cycle;
\filldraw(n21b2)circle[radius=0.03];
\draw(-0.65,-1.45)node[below]{\tiny $\mathcal{T}_3$};

\coordinate(n21b1) at (-0.25,-1.25);
\draw[thick](n21)--(n21b1);
\filldraw[gray](-0.25,-1.25)--(-0.10,-1.75)--(-0.4,-1.75)--cycle;
\filldraw(n21b1)circle[radius=0.03];
\draw(-0.25,-1.45)node[below]{\tiny $\mathcal{T}_4$};

\coordinate(n12b2) at (1.582,-1.25);
\draw[thick](n12)--(n12b2);
\filldraw[gray](1.582,-1.25)--(1.432,-1.75)--(1.732,-1.75)--cycle;
\filldraw(n12b2)circle[radius=0.03];
\draw(1.582,-1.45)node[below]{\tiny $\mathcal{T}_8$};

\coordinate(n12b1) at (1.082,-1.25);
\draw[thick](n12)--(n12b1);
\filldraw[gray](1.082,-1.25)--(0.932,-1.75)--(1.232,-1.75)--cycle;
\filldraw(n12b1)circle[radius=0.03];
\draw(1.082,-1.45)node[below]{\tiny $\mathcal{T}_7$};

\coordinate(n11b2) at (0.65,-1.25);
\draw[thick](n11)--(n11b2);
\filldraw[gray](0.65,-1.25)--(0.50,-1.75)--(0.80,-1.75)--cycle;
\filldraw(n11b2)circle[radius=0.03];
\draw(0.65,-1.45)node[below]{\tiny $\mathcal{T}_6$};

\coordinate(n11b1) at (0.25,-1.25);
\draw[thick](n11)--(n11b1);
\filldraw[gray](0.25,-1.25)--(0.10,-1.75)--(0.4,-1.75)--cycle;
\filldraw(n11b1)circle[radius=0.03];
\draw(0.25,-1.45)node[below]{\tiny $\mathcal{T}_5$};

\end{tikzpicture}

\end{center}
\caption{A representation of the rooted tree with three neighbors. Observe that removing the sub-tree $\tilde{\Lambda}(x)$ divides the graphs into nine connected components.}\label{fig3}
\end{figure}
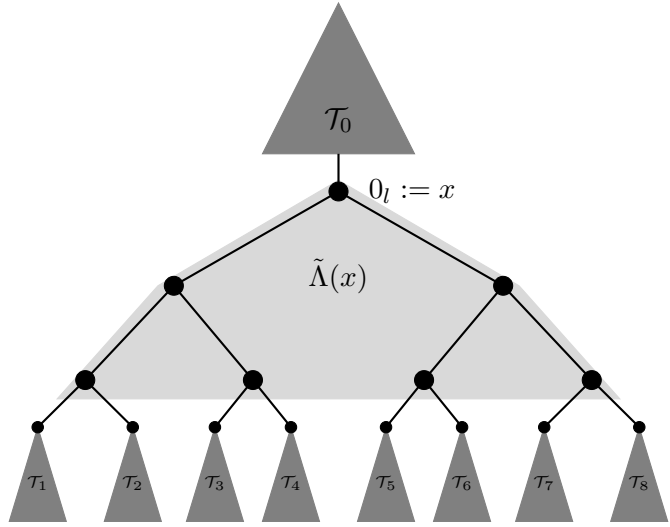

Following Dirac notation, observe that
\begin{align*}
&P_n \Delta_{\mathcal{B}}(I-P_n)\left(\tilde{H}^\omega-E\right)^{-1}(I-P_n)\Delta_{\mathcal{B}} P_n\\
&\qquad=\sum_{(p,q)\in \partial \tilde{\Lambda}(x)} \proj{\delta_p} \dprod{\delta_q}{(\tilde{H}^\omega-E)^{-1}\delta_q},
\end{align*}
this follows because 
$$\dprod{\delta_q}{(I-P_n)\Delta_{\mathcal{B}}P_n \delta_p}=\left\{\begin{matrix} 1 & (p,q)\in\partial\tilde{\Lambda}(n)\\ 0 & otherwise \end{matrix}\right.,$$
and 
$$\dprod{\delta_{q_1}}{(\tilde{H}^\omega)^k \delta_{q_2}}=0\qquad\forall k\in\NN$$ 
for $(p_1,q_1),(p_2,q_2)\in \partial\tilde{\Lambda}(n)$ and $q_1\neq q_2$.
This is also the reason why the random variables 
$$\left\{\dprod{\delta_q}{(\tilde{H}^\omega-E)^{-1}\delta_q}\right\}_{(p,q)\in\partial\tilde{\Lambda}(x)}$$ 
are independent of each other. 
The random variable $\dprod{\delta_q}{(\tilde{H}^\omega-E)^{-1}\delta_q}$ is real for $E\in\RR$, and has absolutely continuous distribution, which follows from the following expression
\begin{align*}
&\dprod{\delta_q}{(\tilde{H}^\omega-E)^{-1}\delta_q}\\
&\qquad\qquad=\cfrac{1}{\omega_q-E-\sum_{x_1\in N_{q}}\cfrac{1}{\omega_{x_1}-E-\sum_{x_2\in N_{x_1}} \cfrac{1}{\ddots-\sum_{x_l\in N_{x_{l-1}}} a^\omega_{x_l}(E)}}},
\end{align*}
where $\{a^\omega_{x_l}(E)\}$ are independent of $\omega_q$, and the distribution of $\omega_q$ is absolutely continuous with respect to the Lebesgue measure. 
Now Theorem \ref{thmSimBetheOp} follows from Theorem \ref{thmSimTreePert}. 

But first few notations are needed.
Denote $\mathcal{T}_L$ to be a rooted tree with root $0_L$ and every vertex have $K+1$ neighbors except root $0_L$ (which has $K$ neighbors) and vertices in the boundary
$$\partial\mathcal{T}_L:=\{x\in\mathcal{T}_L: d(0_L,x)=L\}$$
which have one neighbor each.
\begin{theorem}\label{thmSimTreePert}
 Let $\Delta_{\mathcal{T}_L}$ denote the adjacency matrix over $\mathcal{T}_L$ and set 
$$B_\tau=\sum_{x\in\partial \mathcal{T}_L}t_x\proj{\delta_x}$$
for $\tau=\{t_x\}_{x\in\partial \mathcal{T}_L}\in \RR^{\partial \mathcal{T}_L}$. Then for almost all $\tau$ w.r.t. the Lebesgue measure, the spectrum of $H_\tau=\Delta_{\mathcal{T}_L}+B_\tau$ is simple.
\end{theorem}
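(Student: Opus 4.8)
The plan is to reduce the almost-everywhere statement to the existence of a single good parameter, and then to establish that existence by induction on the depth $L$, exploiting the self-similar structure of $\mathcal{T}_L$. First I would note that the entries of the finite matrix $H_\tau=\Delta_{\mathcal{T}_L}+B_\tau$ are affine in $\tau=\{t_x\}_{x\in\partial\mathcal{T}_L}$, so the coefficients of the characteristic polynomial $\det(H_\tau-xI)$ are polynomials in $\tau$, and hence so is its discriminant $D(\tau)$ in the variable $x$. The spectrum of $H_\tau$ fails to be simple exactly on the zero set $\{D(\tau)=0\}$, which, being the zero set of a polynomial on $\RR^{\partial\mathcal{T}_L}$, is either all of $\RR^{\partial\mathcal{T}_L}$ or Lebesgue-null. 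Thus it suffices to exhibit one $\tau$ for which $H_\tau$ has simple spectrum.

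To produce such a $\tau$ I would induct on $L$ using the decomposition in which deleting the root $0_L$ splits $\mathcal{T}_L$ into $K$ disjoint copies $\mathcal{T}^{(1)}_{L-1},\dots,\mathcal{T}^{(K)}_{L-1}$ of $\mathcal{T}_{L-1}$, with $0_L$ joined to the root $c_j$ of each copy and $\partial\mathcal{T}_L=\bigsqcup_j \partial\mathcal{T}^{(j)}_{L-1}$. The hypothesis I would carry is stronger than simplicity: that the branch potential can be chosen so that $H^{(j)}:=\Delta_{\mathcal{T}^{(j)}_{L-1}}+B_{\tau^{(j)}}$ has simple spectrum and the root vector $\delta_{c_j}$ is cyclic for it (equivalently, every eigenvector of $H^{(j)}$ is nonzero at $c_j$). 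The base case $\mathcal{T}_0$, a single vertex, is trivial. For the step, take the $\tau^{(j)}$ given by the hypothesis and perturb them slightly so that the finite sets $\sigma(H^{(j)})$ become pairwise disjoint; this is possible because simplicity and cyclicity are open conditions while generic shifts destroy coincidences.

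Writing an eigenvector $\psi$ of $H_\tau$ at eigenvalue $E$ through the Schur complement at $0_L$, each branch obeys $(H^{(j)}-E)\psi^{(j)}=-\psi(0_L)\delta_{c_j}$ and the root equation reads $E\,\psi(0_L)=-\psi(0_L)\sum_j m_j(E)$ with $m_j(E)=\dprod{\delta_{c_j}}{(H^{(j)}-E)^{-1}\delta_{c_j}}$. Cyclicity makes each $m_j$ a Herglotz function having a simple pole with strictly positive residue at every point of $\sigma(H^{(j)})$, so the secular function $g(E)=E+\sum_j m_j(E)$ satisfies $g'>0$ and tends to $\pm\infty$ from the two sides of each pole. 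Since disjointness and simplicity make the poles a set of $K\,|\mathcal{T}_{L-1}|$ distinct reals, $g$ has exactly one zero in each of the $|\mathcal{T}_L|$ intervals they determine, giving $|\mathcal{T}_L|$ distinct eigenvalues; each forces $\psi(0_L)\neq0$ and then determines $\psi$ uniquely up to scale. As this count equals $\dim\mathcal{T}_L$, these are all the eigenvalues, the spectrum is simple, and every eigenvector is nonzero at $0_L$, so $\delta_{0_L}$ is cyclic for $H_\tau$; this reinstates the strengthened hypothesis at level $L$ and closes the induction.

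The main obstacle is maintaining this strengthened hypothesis rather than the monotonicity count itself. Cyclicity of the root vector is precisely what guarantees that each branch eigenvalue is a genuine pole of $m_j$, so that no eigenvalue of a branch can persist as a degenerate eigenvalue of $H_\tau$ carried by an eigenvector vanishing at $0_L$; one must verify that this cyclicity is both preserved by the construction (which it is, since all eigenvectors produced are nonzero at $0_L$) and compatible with arranging the branch spectra to be disjoint. Once these structural points are secured, the root-counting for $g$ is routine, and the passage back through the discriminant gives simplicity for almost every $\tau$.
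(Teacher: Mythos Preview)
Your proof is correct and follows essentially the same inductive scheme as the paper: decompose $\mathcal{T}_L$ by deleting the root, carry the strengthened hypothesis that the root vector is cyclic (equivalently, every eigenvector is nonzero at the root), use the recursion for the root Green's function, and count the $|\mathcal{T}_L|$ interlacing zeros of the secular function $g(E)=E+\sum_j m_j(E)$ between the $K|\mathcal{T}_{L-1}|$ disjoint branch eigenvalues.

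The one genuine difference is how you handle the quantifier ``almost every $\tau$''. You reduce it up front via the discriminant: $D(\tau)$ is a polynomial in $\tau$, so it suffices to exhibit a single $\tau$ with simple spectrum, and during the induction you may freely perturb the branch parameters to make the $K$ branch spectra pairwise disjoint while keeping simplicity and root-cyclicity (both open conditions). The paper instead carries ``for almost all $\tau$'' through the induction and, as an additional clause of the inductive hypothesis, proves $\sigma(H_{\tau,l})\cap\sigma(H_{\omega,l})=\varnothing$ for a.e.\ independent pair $(\tau,\omega)$ by first showing every eigenvector is nonzero at some boundary vertex and then applying Feynman--Hellmann and the implicit function theorem. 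Your discriminant reduction is shorter and avoids this extra machinery; the paper's version is more self-contained in that it never leaves the ``a.e.'' framework and yields the disjointness of independent copies as an explicit byproduct. Either route closes the argument.
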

\begin{proof}
The proof is done by induction on $L$.
For the proof  denote $H_{\tau,l}$ to be the operator
$$H_{\tau,l}=\Delta_{\mathcal{T}_l}+\sum_{x\in\partial \mathcal{T}_l}\tau_x \proj{\delta_x}$$
where $\Delta_{\mathcal{T}_l}$ is the adjacency operator on the rooted tree $\mathcal{T}_l$ with root $0_l$.

The induction is done over the statement \emph{ For almost all $\tau$, $H_{\tau,l}$ has simple spectrum with the property that all the eigenfunctions are non-zero at root, 
and $\sigma(H_{\tau,l})\cap \sigma(H_{\omega,l})=\phi$ for almost all $\omega$}.

For $l=0$, the statement is trivial because $H_{\tau,0}$ is the operator on $\CC$ which is multiplication by the random variable $\tau_{0_l}$.

For the induction step suppose that the statement holds for all $l=N-1$. Observe that
$$H_{\tau,N}=\sum_{x:d(0_N,x)=1}(|\delta_{0_N}\rangle\langle \delta_x|+|\delta_x\rangle\langle \delta_{0_N}|)+\sum_{x:d(0_N,x)=1}H_{\tau,x},$$
where $H_{\tau,x}:=\chi_{\mathcal{T}_{x}} H_{\tau,l}\chi_{\mathcal{T}_{x}}$ for the sub-tree $\mathcal{T}_{x}:=\{y\in\mathcal{T}_l: d(0_N,y)=d(0_N,x)+d(x,y)\}$.

\begin{figure}[ht]
 \begin{center}
 \begin{tikzpicture}
\filldraw[gray](-2,1.5)--(-2.4,0.0)--(-1.6,0.0)--cycle;\draw(-2,0.5)node{\tiny$\mathcal{T}_{x_1}$};
\filldraw[gray](-1,1.5)--(-1.4,0.0)--(-0.6,0.0)--cycle;\draw(-1,0.5)node{\tiny$\mathcal{T}_{x_2}$};
 
\filldraw[gray](2,1.5)--(2.4,0.0)--(1.6,0.0)--cycle;\draw(2,0.5)node{\tiny$\mathcal{T}_{x_K}$};
\filldraw[gray](1,1.5)--(1.4,0.0)--(0.6,0.0)--cycle;\draw(1,0.5)node{\tiny$\mathcal{T}_{x_{K-1}}$};

\filldraw(0,2)circle(0.5pt);\draw(0,2)node[above]{\tiny$0_l$};
\filldraw(-2,1.5)circle(0.5pt);\draw(-2,1.5)node[left]{\tiny$x_1$};
\filldraw(2,1.5)circle(0.5pt);\draw(2,1.5)node[right]{\tiny$x_K$};
\filldraw(-1,1.5)circle(0.5pt);\draw(-1,1.5)node[left]{\tiny$x_2$};
\filldraw(1,1.5)circle(0.5pt);\draw(1,1.5)node[right]{\tiny$x_{K-1}$};
\draw(0,2)--(-2,1.5);
\draw(0,2)--(-1,1.5);
\draw(0,2)--(2,1.5);
\draw(0,2)--(1,1.5);

\filldraw(0,1)circle(0.3pt);
\filldraw(-0.1,1)circle(0.3pt);
\filldraw(0.1,1)circle(0.3pt);
\filldraw(-0.2,1)circle(0.3pt);
\filldraw(0.2,1)circle(0.3pt);
 
\end{tikzpicture}
 \end{center}
\caption{The tree $\mathcal{T}_l$ can be viewed as a union of $K$ disjoint trees $\{\mathcal{T}_{x_i}\}_i$ which are connected through their roots $\{x_1,\cdots,x_K\}$ to a separate node $0_l$.}\label{fig4}
\end{figure}
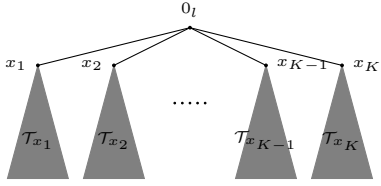

First notice that $H_{\tau,x}$ is unitarily equivalent to $H_{\tilde{\tau},N-1}$ where $\tilde{\tau}$ is restriction of $\tau$ onto the $\partial\mathcal{T}_x$.
Next note that $\{\tau_y\}_y$ that appear in $H_{\tau,x_i}$ are disjoint for two subtrees $\mathcal{T}_{x_1}$ and $\mathcal{T}_{x_2}$ for $x_1\neq x_2$.
Hence by induction hypothesis we have $\sigma(H_{\tau,x})\cap \sigma(H_{\tau,y})=\phi$ for $x\neq y$ and the spectrum of $H_{\tau,x}$ is simple with the property that the eigenfunctions corresponding to the eigenvalues are non-zero at the root, for each $x$.

Since we are working on tree graphs, we have
\begin{align}
\dprod{\delta_{0_N}}{(H_{\tau,N}-z)^{-1}\delta_{0_N}}&=\frac{1}{-z-\sum_{x:d(0_N,x)=1}\dprod{\delta_{x}}{(H_{\tau,x}-z)^{-1}\delta_{x}}}\nonumber\\
&=\frac{1}{-z-\sum_{x:d(0_N,x)=1}\sum_{E\in\sigma(H_{\tau,x})}\frac{\left|\dprod{\psi_{\tau,x,E}}{\delta_x}\right|^2}{E-z}}\label{eqn::treeEq1}
\end{align}
where $\psi_{\tau,x,E}$ is the eigenfunction of $H_{\tau,x}$ for the eigenvalue $E$. 
By the induction hypothesis we have $\dprod{\psi_{\tau,x,E}}{\delta_x}\neq 0$ for each $E\in\sigma(H_{\tau,x})$ and $x$ a neighbor of $0_N$ .
Next using the fact that $\sigma(H_{\tau,x})\cap \sigma(H_{\tau,y})=\phi$ for $x\neq y$, we get that
$$z+\sum_{x:d(0_N,x)=1}\sum_{E\in\sigma(H_{\tau,x})}\frac{\left|\dprod{\psi_{\tau,x,E}}{\delta_x}\right|^2}{E-z}$$
has $\sum_{x:d(0_N,x)=1}\#\sigma(H_{\tau,x})$ many poles and so the equation \eqref{eqn::treeEq1} has 
$$1+\sum_{x:d(0_N,x)=1}\#\sigma(H_{\tau,x})$$
many roots, which is equal to $|\mathcal{T}_N|$. 
But using functional calculus we also have 
\begin{equation*}
\dprod{\delta_{0_N}}{(H_{\tau,N}-z)^{-1}\delta_{0_N}}=\sum_{E\in\sigma(H_{\tau,N})}\frac{|\dprod{\psi_{\tau,N,E}}{\delta_{0_N}}|^2}{E-z}
\end{equation*}
where $\psi_{\tau,N,E}$ is the eigenfunction corresponding to the eigenvalue $E$ for the matrix $H_{\tau,N}$.
So each pole $\dprod{\delta_{0_N}}{(H_{\tau,N}-z)^{-1}\delta_{0_N}}$ corresponds to an eigenvalue, and previous computation shows that there are $|\mathcal{T}_N|$ many poles, which gives the simplicity of the spectrum of $H_{\tau,N}$. 
Finally, the eigenfunction $\psi_{\tau,N,E}$ is non-zero at the root $0_N$ because of the fact that if $\dprod{\psi_{\tau,N,E}}{\delta_{0_N}}=0$, then the pole corresponding to $E$ will not be present in the above expression.

Finally we have to prove  $\sigma(H_{\tau,l})\cap \sigma(H_{\omega,l})=\phi$ for almost all $\tau,\omega$. But first we need the following claim:
\\
\noindent{\bf Claim:} For any solution $\psi\in\CC^{\mathcal{T}_l}\setminus\{0\}$ of $H_{\tau,l}\psi=E\psi$ for $E\in\RR$, there exists $x\in\partial\mathcal{T}_l$ such that $\psi_x\neq 0$.\\
{\it proof:} If for some $E\in\RR$ there exists $\psi\in\CC^{\mathcal{T}_l}$ such that $H_{\tau,l}\psi=E\psi$ and 
$$\psi_x=0\qquad\forall x\in\partial\mathcal{T}_l,$$
then for any $x\in\partial\mathcal{T}_l$
\begin{align*}
 & (H_{\tau,l}\psi)_x=E\psi_x=0\\
 \Rightarrow\qquad& \psi_{Px}+t_x\psi_x=0\\
 \Rightarrow\qquad& \psi_{Px}=0,
\end{align*}
where $Px$ is the unique neighbor of $x$ satisfying $d(0_l,x)=d(0_l,Px)+1$. So we get that $\psi_x=0$ for all $x\in\mathcal{T}_l$ such that $d(0,x)=l-1$. 
Repeating the above argument for $x$ satisfying $d(0,x)=l-1$ will give $\psi_x=0$ for all $x$ such that $d(0_l,x)=l-2$.  
Repeating the last step recursively gives $\psi\equiv0$ giving contradiction, which completes the proof of the claim.
\\
\\
Now to prove $\sigma(H_{\tau,l})\cap \sigma(H_{\omega,l})=\phi$ for almost all $\tau,\omega$.
Denote $\tau=\{\tau_x\}_{x\in\partial \mathcal{T}_l}$, $\omega=\{\omega_x\}_{x\in\partial \mathcal{T}_l}$, 
set $\{E^\tau_{i}\}_i$ and $\{\psi^\tau_i\}$ to be the eigenvalues and the corresponding eigenfunctions for $H_{\tau,l}$ and similarly for $H_{\omega,l}$.
Using Feynman-Hellmann theorem for rank one perturbation, we have
\begin{equation*}
 \frac{dE^\tau_i}{d\tau_x}=|\dprod{\psi^\tau_{i}}{\delta_x}|^2\qquad\forall x\in\partial\mathcal{T}_l,\forall i,
\end{equation*}
and similarly 
\begin{equation*}
 \frac{dE^\omega_i}{d\omega_x}=|\dprod{\psi^\omega_{i}}{\delta_x}|^2\qquad\forall x\in\partial\mathcal{T}_l,\forall i.
\end{equation*}
For each $i$, using the previous claim, there exists $x^\tau_i\in\partial \mathcal{T}_l$ such that $\dprod{\psi^\tau_{i}}{\delta_{x^\tau_i}}\neq 0$, and similarly for $\omega$.
Now using Implicit Function Theorem over $E^\tau_{i}-E^\omega_j=0$, the manifold
$$\{(\tau,\omega)\in\RR^{\partial\mathcal{T}_l}\times\RR^{\partial\mathcal{T}_l} : E^\tau_{i}=E^\omega_j\}$$
has lower dimension than $2|\partial\mathcal{T}_l|$. So in particular 
$$Leb\left(\{(\tau,\omega)\in\RR^{\partial\mathcal{T}_l}\times\RR^{\partial\mathcal{T}_l} : E^\tau_{i}=E^\omega_j\}\right)=0$$
which completes the proof of the induction step.

\end{proof}

\noindent{\bf Acknowledgement:} The author, Dhriti Ranjan Dolai is supported by from the J. C. Bose Fellowship grant of Prof. B. V. Rajarama Bhat.
\appendix
\section{Appendix}
\begin{lemma}\label{lem::ProjSub}
On a separable Hilbert space $\Hi$, let $H$ be a self adjoint operator, 
and for $\phi,\psi\in\Hi$ set $\sigma_\phi(\cdot)=\dprod{\phi}{E_H(\cdot)\phi}$ and $\sigma_{\phi,\psi}(\cdot)=\dprod{\phi}{E_H(\cdot)\psi}$.
Let $f$ be the Radon-Nikodym derivative of $\sigma_{\phi,\psi}$ w.r.t $\sigma_\phi$, then $f(H)\phi$ is the projection of $\psi$ onto the minimal closed $H$-invariant subspace containing $\phi$.
\end{lemma}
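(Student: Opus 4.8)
The plan is to realize the minimal closed $H$-invariant subspace containing $\phi$, which I will write as $\Hi_\phi=\clsp{g(H)\phi : g\in C_c(\RR)}$, via the cyclic model $L^2(\RR,\sigma_\phi)$ of the spectral theorem, under which $\phi$ corresponds to the constant function $1$ and $H$ to multiplication by the coordinate. Writing $P$ for the orthogonal projection onto $\Hi_\phi$, the goal is to show $P\psi=f(H)\phi$. I would first record that $\sigma_{\phi,\psi}\ll\sigma_\phi$, so that $f$ genuinely exists: if $\sigma_\phi(M)=0$ then $\norm{E_H(M)\phi}^2=0$, hence $E_H(M)\phi=0$ and $\sigma_{\phi,\psi}(M)=\dprod{E_H(M)\phi}{\psi}=0$, using that $E_H(M)$ is a self-adjoint projection.

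Next, since $P\psi\in\Hi_\phi$, the cyclic model furnishes some $h\in L^2(\RR,\sigma_\phi)$ with $P\psi=h(H)\phi$; in particular $h(H)\phi$ is automatically a genuine vector of $\Hi$, which disposes of any question about the well-definedness of the right-hand side. It then remains only to identify $h$ with $f$. The key structural point is that $\Hi_\phi$, being the closure of the range of the bounded functional calculus applied to $\phi$, is invariant under every $E_H(M)$, so $\Hi_\phi$ reduces $H$ and $P$ commutes with each spectral projection $E_H(M)$.

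Using this, for any Borel set $M$ I would compute
$$\sigma_{\phi,\psi}(M)=\dprod{\phi}{E_H(M)\psi}=\dprod{\phi}{E_H(M)P\psi}=\dprod{\phi}{(\chi_M h)(H)\phi}=\int_M h\,d\sigma_\phi,$$
where the second equality uses that $E_H(M)(I-P)\psi\in\Hi_\phi^\perp$ while $\phi\in\Hi_\phi$ (a consequence of the commutation of $P$ and $E_H(M)$), and the third uses $E_H(M)h(H)=(\chi_M h)(H)$ in the functional calculus. Since this holds for every $M$, we get $h=f$ almost everywhere with respect to $\sigma_\phi$, and therefore $P\psi=h(H)\phi=f(H)\phi$.

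The computation itself is routine functional calculus and measure theory; the step that must be handled with care — and the real content of the lemma — is the reducing property of $\Hi_\phi$, i.e. that the cyclic subspace defined through the $C_c(\RR)$ functional calculus is invariant under all spectral projections of the possibly unbounded operator $H$. This is what forces the cross term $\dprod{\phi}{E_H(M)(I-P)\psi}$ to vanish and is what makes the orthogonal projection land inside the functional calculus of $\phi$. An alternative route avoiding the cyclic model would be to verify the two defining properties of $P\psi$ directly, first proving $f\in L^2(\RR,\sigma_\phi)$ via the Cauchy--Schwarz bound $|\sigma_{\phi,\psi}(M)|\leq\sqrt{\sigma_\phi(M)}\sqrt{\sigma_\psi(M)}$ together with a partition argument bounding $\int|f|^2\,d\sigma_\phi$ by $\norm{\psi}^2$, and then checking $\dprod{g(H)\phi}{\psi-f(H)\phi}=0$ for all $g\in C_c(\RR)$; this yields the same conclusion.
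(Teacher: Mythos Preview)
Your proposal is correct. Interestingly, what you present as the ``alternative route'' at the end is exactly what the paper does: it checks directly that $\dprod{g(H)\phi}{\psi-f(H)\phi}=\int g\,d\sigma_{\phi,\psi}-\int gf\,d\sigma_\phi=0$ for all $g\in L^2(\RR,\sigma_\phi)$, concluding $\psi-f(H)\phi\perp\Hi_\phi$, and combines this with $f(H)\phi\in\Hi_\phi$ from the cyclic model. Your primary argument instead starts from $P\psi=h(H)\phi$ and identifies $h$ via the reducing property of $\Hi_\phi$; this is a perfectly valid detour, but the extra ingredient (commutation of $P$ with the spectral projections) is not needed if one argues as the paper does, since the orthogonality computation already isolates $f(H)\phi$ as the projection without ever naming $P\psi$ first. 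The paper also does not pause to justify $\sigma_{\phi,\psi}\ll\sigma_\phi$ or $f\in L^2(\RR,\sigma_\phi)$, which you handle more carefully.
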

\begin{proof}
Let $\Hi_\phi$ denote the minimal closed $H$-invariant subspace containing $\phi$, 
then $(\Hi_\phi,H)$ is unitarily equivalent to $(L^2(\RR,\sigma_\phi),M_{Id})$ where $M_{Id}$ is multiplication with the identity map on $\RR$. 
We have the linear functional 
$$g\mapsto \dprod{g(H)\phi}{\psi-f(H)\phi}$$
for $g\in L^2(\RR,\sigma_\phi)$. Observe that 
\begin{align*}
\dprod{g(H)\phi}{\psi-f(H)\phi}=\dprod{g(H)\phi}{\psi}-\dprod{g(H)}{f(H)\phi}\\
=\int g(x)d\sigma_{\phi,\psi}(x)-\int g(x)f(x)d\sigma_{\phi}(x)=0,
\end{align*}
because $f$ is Radon-Nikodym derivative of $\sigma_{\phi,\psi}$ with respect to $\sigma_\phi$. Since $g(H)\phi$ are dense in $\Hi_\phi$ for $\phi\in L^2(\RR,\sigma_\phi)$, we have
$$\psi-f(H)\phi\perp \Hi_\phi,$$
hence $f(H)\phi$ is the projection of $\psi$ on to $\Hi_\phi$.

\end{proof}

\begin{lemma}\label{lem::SumHilSub}
On a separable Hilbert space $\Hi$ let $H$ be a self-adjoint operator and $Q$ be a finite ranked projection. Let $\{e_i\}_{i\in\NN}$ be a orthonormal basis for the subspace $Q\Hi$ and denote
$$\Hi_i=\clsp{f(H)e_i:~f\in C_c(\RR)},$$
and
$$\Hi_Q=\clsp{f(H)\phi:~f\in C_c(\RR)~\&~\phi\in Q\Hi}.$$
Then
$$\Hi_Q=\sum_i \Hi_i,$$
where $\sum_i \Hi_i$ denotes the closure of finite linear combinations of elements of $\Hi_i$.
\end{lemma}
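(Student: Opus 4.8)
The plan is to prove the two inclusions $\sum_i \Hi_i \subseteq \Hi_Q$ and $\Hi_Q \subseteq \sum_i \Hi_i$ separately, the entire argument resting on the hypothesis that $Q$ is finite rank, so that $Q\Hi$ is finite dimensional and every vector in it is a \emph{finite} linear combination of the $e_i$; this is what removes all convergence issues.

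First I would dispose of the inclusion $\sum_i \Hi_i \subseteq \Hi_Q$. Since each $e_i \in Q\Hi$, the generating family $\{f(H)e_i : f \in C_c(\RR)\}$ of $\Hi_i$ is contained in the generating family $\{f(H)\phi : f \in C_c(\RR),\ \phi \in Q\Hi\}$ of $\Hi_Q$; hence $\Hi_i \subseteq \Hi_Q$ for every $i$. As $\Hi_Q$ is a closed subspace, it contains every finite linear combination of elements of the $\Hi_i$ together with their limits, which is precisely $\sum_i \Hi_i$.

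For the reverse inclusion I would show that each generator of $\Hi_Q$ already lies in $\sum_i \Hi_i$. Fix $\phi \in Q\Hi$ and $f \in C_c(\RR)$. Because $Q$ is finite rank, the basis $\{e_i\}$ is finite, say $\{e_i\}_{i=1}^r$ with $r = \dim Q\Hi$, and therefore
$$\phi = \sum_{i=1}^r \dprod{e_i}{\phi}\, e_i.$$
Since $f \in C_c(\RR)$, the operator $f(H)$ is bounded and linear, so applying it commutes with this finite sum and gives
$$f(H)\phi = \sum_{i=1}^r \dprod{e_i}{\phi}\, f(H)e_i,$$
a finite linear combination of vectors $f(H)e_i \in \Hi_i$. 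Thus $f(H)\phi \in \sum_i \Hi_i$. As $\sum_i \Hi_i$ is by definition closed and the vectors $f(H)\phi$ generate $\Hi_Q$, we conclude $\Hi_Q \subseteq \sum_i \Hi_i$, and the two inclusions give equality.

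I do not expect any genuine obstacle: the finiteness of the rank of $Q$ eliminates all convergence questions inside $Q\Hi$, and the only ingredients are the boundedness of $f(H)$ for $f \in C_c(\RR)$ and the definition of the closed sum of subspaces. The one point worth stating explicitly is that the expansion of $\phi$ is a \emph{finite} sum, so that the bounded operator $f(H)$ can be pulled through it without any passage to a limit; were $Q$ of infinite rank this step would require an approximation argument, but here it is immediate.
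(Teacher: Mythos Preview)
Your proof is correct and follows essentially the same two-inclusion strategy as the paper's proof: the inclusion $\sum_i \Hi_i \subseteq \Hi_Q$ is immediate from $e_i \in Q\Hi$, and the reverse inclusion is obtained by expanding an arbitrary $\phi \in Q\Hi$ in the basis $\{e_i\}$ and applying the bounded operator $f(H)$ termwise. Your version is in fact slightly cleaner than the paper's, since you explicitly exploit the finiteness of the basis to write $f(H)\phi$ directly as a finite sum, whereas the paper (despite having stated $Q$ is finite rank) indexes the basis by $\NN$ and passes through partial sums $\psi_N$ before invoking closedness.
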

\begin{proof}
Since $\Hi_i\subseteq \Hi_Q$ for any $i$, we always have
$$\sum_i \Hi_i\subseteq \Hi_Q.$$
For the other way round note that we only have to show $f(H)\phi\in\sum_i \Hi_i$, for $\phi\in Q\Hi$. Since $\{e_i\}_i$ is a basis, we have
$$\phi=\sum_i a_ie_i.$$
Using it, define
$$\psi_N=\sum_{i=1}^N  a_i f(H)e_i,$$
which satisfies $\psi_N\in\sum_i\Hi_i$ for any $N\in\NN$. Now the conclusion of the lemma holds, since $\sum_i\Hi_i$ is closed.
\end{proof}

\bibliographystyle{plain}

\end{document}